\newtheorem{theorem}{Theorem}[section]
\newtheorem{definition}[theorem]{Definition}
\newtheorem{lemma}[theorem]{Lemma}
\numberwithin{equation}{section}
\newtheorem{proposition}[theorem]{Proposition}
\newtheorem{remark}[theorem]{Remark}
\renewcommand{\ell}{l}
\renewcommand{\epsilon}{\varepsilon}
\def\al{\alpha}                    
\def\be{\beta}
\def\ga{\gamma}
\def\ve{\varepsilon}
\def\eps{\varepsilon}
 \def\cS{\mathcal{S}}
\def\px{\langle x \rangle}
\def\rd{\bR^d}
\def\rdd{{\bR^{2d}}}
\def\R{\right)}
\def\<{\left<}
\def\>{\right>}
\def\mv1{M_v^1}
\def\mn{(m,n)}
\def\mn'{(m',n')}
\newcommand{\eab}{\eps^{|\alpha|+|\beta|}}
\newcommand{\mab}{\max\{|\alpha|,|\beta|\}!}
\newcommand{\tg}{\tilde{\gamma}}
\newcommand{\tid}{\tilde{\delta}}
\newcommand{\sg}{\mathcal{A}_{ sect}(\rd)}
\def\N{\mathbb{N}}
\def\R{\mathbb{R}}
\def\C{\mathbb{C}}
\def\rd{\mathbb{R}^d}
\def\rdd{\mathbb{R}^{2d}}
\begin{document}

\begin{abstract} We prove, for a wide class of
semilinear elliptic
differential and
pseudodifferential equations
in $\R^d$, that the solutions
which are sufficiently
regular and have a certain
decay at infinity extend to
holomorphic functions in
sectors of $\mathbb{C}^d$, improving
earlier results where the
extension was shown for a
strip. Moreover, exponential
decay for such extended
solutions is also proved. The
results apply, in particular,
to solitary wave solutions of
many classical nonlinear evolution
equations.\\

\end{abstract}

\title[Holomorphic extension for elliptic
equations]{Holomorphic extension of
solutions of semilinear elliptic
equations}
\author{Marco Cappiello \and Fabio Nicola}
\address{Dipartimento di Matematica,  Universit\`{a} degli Studi di Torino,
Via Carlo Alberto 10, 10123
Torino, Italy}
\address{Dipartimento di Matematica, Politecnico di
Torino, Corso Duca degli
Abruzzi 24, 10129 Torino,
Italy}
\email{marco.cappiello@unito.it}
\email{fabio.nicola@polito.it}
\subjclass[2000]{35A20, 35B65, 35S05,
 35Q35, 76B15}
\date{}
\keywords{Semilinear elliptic
equations, holomorphic extension,
solitary waves, exponential decay,
ground states}

\maketitle

\section{Introduction}
The main concern in this paper is the study of holomorphic extensions of the solutions of semilinear elliptic equations in $\R^{d}$. Broadly speaking, we deal with
equations of the form \begin{equation} \label{generalequation}
Pu=F[u],
\end{equation}
where $P$ is a linear elliptic differential, or even pseudodifferential, operator in $\R^{d}$ and $F[u]$ is a nonlinearity, possibly involving the derivatives of $u$. For a wide class of equations of this type it is known that every solution $u$ sufficiently regular and with a certain decay at infinity, actually is analytic on $\R^{d}$ and it extends to a holomorphic function in a strip of $\C^{d}$ of the form
\begin{equation}\label{in0}
\{z=x+iy\in\mathbb{C}^d:\
|y|\leq\eps\},
\end{equation}
for some $\eps>0$, satisfying there the
estimates
\begin{equation}\label{in0bis}
|u(x+iy)|\leq C e^{-c|x|}
\end{equation}
for some $C>0$, $c>0$. A
pioneering work on this
subject was the paper by Kato
and Masuda \cite{kato}. Later
the problem of the
holomorphic extension in a
strip has been intensively
studied in connection with
the applications to solitary
wave equations. In particular
it was noticed in dimension 1
that several model equations
like the Korteweg-de Vries
equations and its
generalizations,
Schr\"odinger-type equations,
long wave-type equations
admit solitary wave solutions
which extend to meromorphic
functions with poles out of a
strip of the form \eqref{in0}
and having a decay of type
\eqref{in0bis}. Among the
main contributions in this
sense we recall the papers by
Bona and Li \cite{BL1},
\cite{BL2}, Grujic' and
Kalisch \cite{gru}, Bona,
Grujic' and Kalisch
\cite{bo1}, Bona and Weissler
\cite{bo3}. We also recall
the paper \cite{hay} by
Hayashi on well-posedness of
the generalized KdV equation
in Bergman-Szeg\"o spaces of
holomorphic functions in a
sector. Apart from its
interest ``per se'' in the
analysis of regularity
properties of the solutions
of differential equations,
the study of {\it complex}
singularities of solitary
waves could give information
also on the onset of real
blowup; we refer to Bona and Weissler
\cite{bo3} for a fascinating
discussion in this
connection.\par Recently, the
properties described above
have been proved for some
general classes of semilinear
elliptic equations in any
dimension, even with variable
coefficients; see for example
Biagioni and Gramchev \cite{A3}, Gramchev \cite{gramchev} and Gramchev, Cappiello and Rodino \cite{A18,A38,A39, CGR, CGR2}. The
results in these papers have
been stated and proved in
terms of estimates in the
Gelfand-Shilov spaces of type S, cf.
\cite{GS}.  We refer to Nicola and Rodino \cite[Chapter 6]{nicola} for a self-contained account of these results.\par
Nevertheless
in the above mentioned papers
some relevant issues remained
unexplored. The first one is
the identification of a
\textit{maximal} holomorphic
extension. In other words,
the problem is to understand
what is the biggest complex domain on which a
holomorphic extension is
possible. The second one is
related to a dual aspect and
concerns the identification
of the maximal domain on which the decay
properties on $\R^{d}$ of
solutions remain valid.
\par To be precise, let us introduce
a class of operators which
the above results and those
in the present paper apply
to. First, one can consider
differential operators with
polynomial coefficients
\begin{equation}\label{in1bis}
P=\sum_{|\alpha|\leq m,\,|\beta|\leq n}
c_{\alpha\beta}x^\beta D^\alpha,
\end{equation}
$m\geq 1$, $n\geq0$,
$c_{\alpha\beta}\in\mathbb{C}$,
with symbol
\begin{equation}\label{in1bisbis}
p(x,\xi)=\sum_{|\alpha|\leq
m,\,|\beta|\leq n}
c_{\alpha\beta}x^\beta
\xi^\alpha
\end{equation}
of {\it $G$-elliptic} type,
namely satisfying the
following global version of
ellipticity:
\begin{equation}\label{in1}
\sum_{|\alpha|=m,\,|\beta|\leq n}
c_{\alpha\beta}x^\beta
\xi^\alpha\not=0,\quad x\in\rd,\
\xi\not=0,
\end{equation}
\begin{equation}\label{in2}
\sum_{|\alpha|\leq m,\,|\beta|= n}
c_{\alpha\beta}x^\beta
\xi^\alpha\not=0,\quad x\not=0,\
\xi\in\rd,
\end{equation}
\begin{equation}\label{in3}
\sum_{|\alpha|=m,\,|\beta|= n}
c_{\alpha\beta}x^\beta
\xi^\alpha\not=0,\quad x\not=0,\
\xi\not=0.
\end{equation}
If $P$ has constant
coefficients (that is $n=0$)
then \eqref{in1},
\eqref{in2}, \eqref{in3} are
satisfied if and only if $P$ is
elliptic and its symbol
$p(\xi)$ satisfies
$p(\xi)\not=0$ in $\rd$. As a
relevant model one can
consider the operator
$P=-\Delta + \lambda$, $
\lambda\in\mathbb{C},\,\lambda\not\in\mathbb{R}_{-}\cup\{0\}$.\par
More generally we deal with pseudodifferential
operators. Namely, given real
numbers $m
>0$, $n\geq 0$ we consider symbols
$p(x,\xi)$ satisfying the
following estimates
\begin{equation}
\label{Gsymbols}
|\partial_{\xi}^{\alpha}\partial_{x}^{\beta}p(x,\xi)|\leq
C^{|\alpha|+|\beta|+1}\alpha!
\beta!\langle \xi
\rangle^{m-|\alpha|}\langle x
\rangle^{n-|\beta|}
\end{equation} for every
$(x,\xi) \in \R^{2d}, \alpha
, \beta \in \N^{d}$ and for
some positive constant $C$
independent of $\alpha,
\beta$  (we denote as usual $\langle
x\rangle=(1+|x|^2)^{1/2}$). Notice that every
polynomial $p(x,\xi)$ as in
\eqref{in1bisbis} satisfies
such estimates. For this
general class the condition
of $G$-ellipticity can be
stated by requiring that
\begin{equation}
\label{Gell} |p(x,\xi)|\geq c
\langle \xi \rangle^{m}
\langle x \rangle^{n}, \qquad
|x|+|\xi|\geq C,
\end{equation}
for some positive constants
$c,\,C$. It is known that a
symbol of the form
\eqref{in1bisbis} satisfies
\eqref{Gell} if and only if
it satisfies the three
conditions \eqref{in1},
\eqref{in2}, \eqref{in3}
simultaneously. \par
Finally, concerning the
nonlinear term $F[u]$ in
\eqref{generalequation}, we
assume here that it is of the
form
\begin{equation}
\label{A5.2}
F[u]=\sum_{h,l}\sum_{\rho_1,\ldots,\rho_l}
F_{h,l,\rho_1,\ldots,\rho_l}
x^h \prod_{k=1}^l
\partial^{\rho_k}u,
\end{equation}
where $h\in\N^d$, with
$0\leq|h|\leq\max\{n-1,0\}$,
$\rho_k\in\N^d$ with
$0\leq|\rho_k|\leq
\max\{m-1,0\}$, $l\in\N$,
$l\geq2$ and
$F_{h,l,\rho_1,\ldots,\rho_l}\in\mathbb{C}$
(the above sum being finite).
Moreover, we allow some of
the factors in \eqref{A5.2}
to be replaced by their
complex conjugates. \par As a 
simple example, consider the
equation
\begin{equation}\label{KG}
-\Delta u+u=|u|^{l-1}u,
\end{equation}
with $l\in\mathbb{N}$, $l>2$ odd, which
arises e.g. when looking for standing
wave solutions for the Klein-Gordon or
Schr\"odinger equation, as well as
travelling wave solutions for the
Klein-Gordon equation (cf. Berestycki
and Lions \cite{A35}). The existence of
solutions in $H^1(\rd)$ and their
exponential decay was studied in
\cite{A35}, whereas the possibility of
extending them holomorphically on a
strip has been recently shown in
\cite{A39} (incidentally, the
exponential decay generally drops for
elliptic equations which are not
globally elliptic, such as $-\Delta
u=|u|^{l-1}u$).  \par
 The above
more general class of operators
includes, in dimension $d=1$, the
solitary wave counterpart of several
evolution equations of Korteweg-de
Vries type, as well as of long-wave
type; see \cite{BL2} and below. This
also motivated the statements of the
results for pseudodifferential
operators, or at least for Fourier
multipliers (e.g. $p(\xi)=\xi\,{\rm
Coth}\,\xi+\lambda$, $\lambda>-1$, for
the intermediate-long-wave equation;
see \cite{BL2}).\par Now, it is known
from \cite{BL2} and \cite[Theorem
7.3]{A39} that (for classes of
nonlinearities that intersect the above
one), if $P$ is $G$-elliptic, then all
the solutions
 $u$ of \eqref{generalequation} with $u \in H^{s}(\R^{d})$,
 $s>\frac{d}{2}+m-1$ for $n>0$
 (respectively $\langle x \rangle^{\ve_{o}}u \in H^{s}(\R^{d})$,
 $s>\frac{d}{2}+m-1$ for $n=0$)
  actually decay at infinity like $e^{-c|x|},c>0$,
  and extend holomorphically on a strip of the form
  \eqref{in0}.\par
Here we shall improve this
result by showing that the
holomorphic extension and
the exponential decay actually
hold in a {\it sector} of the
complex domain. Namely, we
have the following result,
which seems new even
for the simplest equation
\eqref{KG}.

\begin{theorem}
\label{mainthm} Let $P$ be a
pseudodifferential operator
with symbol $p$ satisfying
\eqref{Gsymbols}, $m>0$,
$n\geq 0$, and assume that $p$
is $G$-elliptic, that is
\eqref{Gell} is satisfied.
Let $F[u]$ be of the form
\eqref{A5.2} (possibly with
some factors in the product
replaced by their
conjugates). Assume moreover
that $u \in H^s(\rd)$,
$s>\frac{d}{2}+\max\{|\rho_k|\}$,
is a solution of
\eqref{generalequation}. In
the case $n=0$ assume further
$\langle
x\rangle^{\epsilon_0} u\in
L^2(\rd)$, for some
$\epsilon_0>0$. Then $u$
extends to a holomorphic
function in the sector of
$\mathbb{C}^d$
\begin{equation}\label{in6}
\mathcal{C}_\eps=\{z=x+iy\in\mathbb{C}^d:\
|y|\leq\eps(1+|x|)\},
\end{equation}
for some $\eps>0$, satisfying there the
estimates \eqref{in0bis} for some
constants $C>0, c>0$.
\end{theorem}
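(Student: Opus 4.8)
The plan is to establish holomorphic extension into a sector by proving quantitative estimates on the derivatives of $u$ that capture both the analytic regularity and the growth/decay in $x$ simultaneously. The natural framework is to show that $u$ belongs to a Gelfand-Shilov type space, but sharpened so that the decay and the analytic bounds are coupled in the precise way that a sector (rather than a strip) requires. Concretely, I would seek estimates of the form
\begin{equation}\label{plan-est}
|\partial^\alpha u(x)|\leq C^{|\alpha|+1}\alpha!\,\langle x\rangle^{-|\alpha|}e^{-c|x|},\quad x\in\rd,\ \alpha\in\N^d,
\end{equation}
for some constants $C,c>0$. The factor $\langle x\rangle^{-|\alpha|}$ is the crucial new ingredient: it says that the radius of analyticity grows linearly with $|x|$, which is exactly what translates into holomorphy on the sector $\cC_\eps$ rather than on a fixed strip. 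Once \eqref{plan-est} is in hand, the holomorphic extension is obtained by summing the Taylor series of $u$ in the complex directions; the series $\sum_\alpha \partial^\alpha u(x)\,(iy)^\alpha/\alpha!$ converges for $|y|\lesssim \langle x\rangle$, i.e. precisely on $\cC_\eps$, and the resulting bound $e^{-c|x|}$ carries over to give \eqref{in0bis}.

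\emph{First} I would record the starting point: by the already-cited results (namely \cite{BL2} and \cite[Theorem 7.3]{A39}, invoked in the paragraph preceding the theorem) the solution $u$ is known to decay like $e^{-c|x|}$ and to extend holomorphically to a strip. So the qualitative exponential decay and the basic analytic regularity are available; what must be upgraded is the \emph{rate} at which the radius of analyticity is allowed to grow as $|x|\to\infty$. \emph{Next} I would set up an iterative or bootstrap scheme on the derivatives. Applying $\partial^\gamma$ to the equation $Pu=F[u]$ and using $G$-ellipticity \eqref{Gell} to invert $P$ (modulo a regularizing/smoothing remainder, via a parametrix construction in the $G$-symbol calculus governed by \eqref{Gsymbols}), one expresses high-order derivatives of $u$ in terms of lower-order ones together with the derivatives of the nonlinearity $F[u]$. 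The structure of $F[u]$ in \eqref{A5.2}, being a finite sum of monomials $x^h\prod_k \partial^{\rho_k}u$ with $|h|\leq n-1$ and $|\rho_k|\leq m-1$, is designed so that each application of the parametrix gains exactly the weight $\langle\xi\rangle^{-m}\langle x\rangle^{-n}$ needed to close the estimate \eqref{plan-est} with the $\langle x\rangle^{-|\alpha|}$ weight preserved.

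The main obstacle, I expect, is controlling the \emph{combinatorics of the nonlinear term} under differentiation while keeping the sharp factorial and weight bounds. Differentiating the product $\prod_{k=1}^l \partial^{\rho_k}u$ many times produces, via the Leibniz rule, a sum over partitions of the multi-index whose number of terms grows like a multinomial coefficient; one must verify that these combinatorial factors are absorbed by the $\alpha!$ on the right-hand side of \eqref{plan-est} and do not degrade the constant $C$ after infinitely many iteration steps. This is the classical difficulty in analytic-regularity bootstraps, here compounded by the need to track the $x$-weights $\langle x\rangle^{-|\alpha|}$ and the exponential factor $e^{-c|x|}$ through each step; the weight $\langle x\rangle^{-|\alpha|}$ must be shown to behave multiplicatively under the Leibniz expansion (using $\langle x\rangle^{-|\rho_1|}\cdots\langle x\rangle^{-|\rho_l|}\leq \langle x\rangle^{-|\rho_1+\cdots+\rho_l|}$ and that the polynomial factor $x^h$ with $|h|\le n-1$ is compatible with the gain of $\langle x\rangle^{-n}$ from the parametrix). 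I would handle this by formulating the estimates in an appropriate weighted Banach algebra of formal power series, or equivalently by an induction on $|\alpha|$ with a carefully chosen majorant, so that the nonlinear and the linear contributions can both be bounded uniformly. The remaining steps — summing the Taylor series to produce the holomorphic extension on $\cC_\eps$ and transferring the bound $e^{-c|x|}$ to the complex estimate \eqref{in0bis} — are then routine consequences of \eqref{plan-est}.
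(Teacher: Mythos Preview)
Your overall strategy—prove the weighted analytic estimate \eqref{plan-est} by a parametrix-plus-iteration scheme on the derivatives, then sum the Taylor series—is the same as the paper's. The target \eqref{plan-est} is exactly the intermediate estimate the paper obtains (their equation for $|\partial^\alpha f(x)|\leq C^{|\alpha|+1}\alpha!\langle x\rangle^{-|\alpha|}e^{-c|x|}$ in the proof of the extension theorem), and your reading of the combinatorial obstacle is accurate.

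There are, however, two differences worth flagging. First, the paper does \emph{not} use the prior strip/decay results as input; it bootstraps from the bare hypotheses $u\in H^s$ (and $\langle x\rangle^{\eps_0}u\in L^2$ when $n=0$) to $u\in\cS(\rd)$ via a regularity lemma, and then runs the iteration. Your plan to import the exponential decay at the outset is legitimate, but it is not how the paper proceeds, and it is unnecessary. Second, and more substantively, the paper does not try to close the iteration directly on the pointwise quantities $|\partial^\alpha u(x)|\langle x\rangle^{|\alpha|}$. Instead it works with the Sobolev-based functional
\[
S_N^{s,\eps}[u]=\sum_{|\alpha|+|\beta|\leq N}\frac{\eps^{|\alpha|+|\beta|}}{\max\{|\alpha|,|\beta|\}!}\,\|x^\beta\partial^\alpha u\|_{H^s},
\]
tracking \emph{all} mixed monomials $x^\beta\partial^\alpha u$, with the unusual denominator $\max\{|\alpha|,|\beta|\}!$. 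This is the device that makes the scheme close: when you commute $x^\beta\partial^\alpha$ past the pseudodifferential $P$ (or its parametrix), the analytic symbol estimates \eqref{Gsymbols} generate terms in which $x$-powers and $\partial$-powers are traded against one another in an unbalanced way, and the $\max\{|\alpha|,|\beta|\}!$ weighting is precisely what absorbs these exchanges (the paper devotes a long proposition to this commutator estimate). If you attempt to iterate only on $\partial^\alpha u$ with the fixed weight $\langle x\rangle^{-|\alpha|}$, the commutator $[P,\partial^\alpha]$ will produce terms like $(\partial_x^\delta p)(x,D)\partial^{\alpha-\delta}u$ whose $x$-behaviour (order $n-|\delta|$) no longer matches the derivative order, and your scheme will not obviously close. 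The Sobolev framework also lets one use operator-norm bounds for $G$-pseudodifferential operators directly, avoiding kernel estimates. Once $S_\infty^{s,\eps}[u]<\infty$ is established, the pointwise bound \eqref{plan-est} and the extension follow easily, just as you describe.
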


The width of the
sector depends in general on
the solution $u$ considered;
in fact, for autonomous
equations the width {\it
must} depend on $u$, because
one can exploit the
invariance with respect to translations to
move the complex
singularities parallel to
$\rd$.\par The shape of the
domain  of holomorphic
extension as a sector is
 sharp, in the sense that, even in dimension $d=1$, for any
angle $\theta\not=0,\pi$
we can find $G$-elliptic equations with constant
coefficients admitting
Schwartz solutions whose
meromorphic extensions have a
sequence of poles along the
ray ${\rm arg}\,z=\theta$. We
refer to Section
\ref{secesempi} below for
details on this point and also for remarks on the a priori regularity assumptions in Theorem \ref{mainthm}.
\par The linear case
($F[u]=0$) deserves a separate
discussion. Indeed, the
analysis of the linear
equation $Pu=0$ is important
for the holomorphic extension
and the decay of
eigenfunctions of
$G$-elliptic operators and
their powers; see Maniccia and Panarese \cite{MP} and Schrohe
\cite{Schrohe:1}. Moreover in
the linear case it is
possible to relax the
assumptions on the regularity
and decay of $u$, admitting
solutions with a priori
algebraic growth and in that
case the width of the sector
is independent on the
particular solution
considered; see Theorem
\ref{linearthm} below.\par
Finally we present an
application of the above
result to solitary waves.
Following \cite{BL2}, we
consider the following class
of Korteweg-de Vries-type
equation
\begin{equation}\label{kdvt}
v_t+v_x+F[v]_x-(Mv)_x=0,
\quad (t,x)\in\R\times\R,
\end{equation}
and long-wave-type equations
\begin{equation}\label{lwt}
v_t+v_x+F[v]_x+(Mv)_t=0,
\quad (t,x)\in\R\times\R,
\end{equation}
where $M=p(D)$ is a Fourier
multiplier, $F[v]$ is a
polynomial with real coefficients,
$F(0)=F'(0)=0$,  and
subscripts denote
derivatives. We look for
solutions $v(t,x)$ of
solitary wave type, i.e.
$v(t,x)=u(x-Vt)$, for some
function $u$ of one variable
and some constant velocity
$V$. We have the following
result.
\begin{theorem}\label{applthm}
Let $p(\xi)$ satisfy the analytic
symbol estimates of order
$m\in\R$, namely
\begin{equation}\label{simbest}
|\partial^{\alpha}
p(\xi)|\leq
A^{\alpha+1}\alpha!\langle
\xi\rangle^{m-\alpha},\quad
\xi\in\R,\
\alpha\in\mathbb{N}
\end{equation}
for some constant $A>0$, as
well as the lower bounds
\begin{equation}\label{plm0}
p(\xi)\geq0,\ \xi\in\R,\qquad
p(\xi)\geq C^{-1}|\xi|^m,\ |\xi|\geq C,
\end{equation}
for some constant $C>0$.
Suppose moreover
$m\geq1$.\par
 Let
$v(t,x)=u(x-Vt)$ be a weak
solution of \eqref{kdvt} or
\eqref{lwt}, with $V>1$,
$u\in L^\infty(\R)$,
$\lim_{x\to\infty} u(x)=0$.
Then $u$ extends to a
holomorphic function $u(x+iy)$ in the
sector
\begin{equation}\label{sector}
\{z=x+iy\in\mathbb{C}:\
|y|\leq\eps(1+|x|)\}
\end{equation} for some $\eps>0$,
satisfying there the
estimates \eqref{in0bis} for
some constants $C>0, c>0$.
\end{theorem}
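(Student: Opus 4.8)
The plan is to reduce the solitary-wave problem to a stationary equation of the form \eqref{generalequation} and then to apply Theorem \ref{mainthm} with $d=1$. First I would insert the travelling-wave ansatz $v(t,x)=u(x-Vt)$ into \eqref{kdvt} (resp. \eqref{lwt}). Since $M=p(D)$ acts only in the $x$ variable one has $(Mv)(t,x)=(Mu)(x-Vt)$, so that, writing $\xi=x-Vt$, equation \eqref{kdvt} (resp. \eqref{lwt}) becomes an identity of the form $\frac{d}{d\xi}[\,\cdots]=0$; integrating once and using $u(\xi)\to0$ as $\xi\to\infty$ together with $F(0)=0$ to kill the constant of integration, I obtain
\[
p(D)u+(V-1)u=F(u)\quad\text{resp.}\quad Vp(D)u+(V-1)u=F(u).
\]
In both cases this is $Pu=F(u)$, where $P$ is the Fourier multiplier with symbol $P(\xi)=p(\xi)+(V-1)$ or $P(\xi)=Vp(\xi)+(V-1)$, a constant-coefficient operator, to which the hypotheses are tailored with $n=0$.

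Next I would verify the hypotheses of Theorem \ref{mainthm}. The analytic estimates \eqref{simbest} for $p$ are stable under multiplication by $V$ and under adding the constant $V-1$ (which touches only the order-zero term and is harmless since $m\geq1$), so $P$ satisfies \eqref{Gsymbols} with $n=0$. $G$-ellipticity \eqref{Gell} is immediate from $V>1$ and $p\geq0$: one has $P(\xi)\geq V-1>0$ for all $\xi$, while the lower bound in \eqref{plm0} gives $P(\xi)\geq c\langle\xi\rangle^m$ for $|\xi|\geq C$; hence $P(\xi)$ is bounded below by $c\langle\xi\rangle^m$ and never vanishes, which for a Fourier multiplier is precisely $G$-ellipticity. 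Finally, since $F$ is a real polynomial with $F(0)=F'(0)=0$ we may write $F(u)=\sum_{l\geq2}a_lu^l$, which is of the admissible form \eqref{A5.2} with $n=0$ (thus $h=0$) and no derivatives, so that $\max_k|\rho_k|=0$ and the regularity threshold in Theorem \ref{mainthm} is merely $s>\tfrac12$.

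The one genuinely analytic step is to upgrade the weak solution $u\in L^\infty(\R)$, $u\to0$, to the input required by Theorem \ref{mainthm}, namely $u\in H^s(\R)$ for some $s>\tfrac12$ and $\langle x\rangle^{\epsilon_0}u\in L^2(\R)$. For this I would invoke the solitary-wave regularity theory of Bona and Li \cite{BL2}. Writing the equation as the convolution identity $u=K\ast F(u)$ with $K=\mathcal{F}^{-1}(1/P)$, the analyticity and non-vanishing of $P$ make $1/P$ holomorphic and of order $-m$ in a strip $|\mathrm{Im}\,\xi|<\delta$, whence $K$ decays exponentially by a Paley--Wiener contour shift; combining this with the superlinearity $|F(u)|\leq C|u|^2$ valid near $u=0$ (here $F'(0)=0$ is used) and a bootstrap on the integral equation yields that $u$ is smooth and decays exponentially. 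In particular $u\in H^s(\R)$ for every $s$ and $\langle x\rangle^{\epsilon_0}u\in L^2(\R)$ for every $\epsilon_0>0$. This bootstrap is the main obstacle, as the superlinear gain has to be iterated carefully to pass from mere vanishing at infinity to a definite exponential rate; for the class of multipliers considered, however, it is exactly the content of \cite{BL2}.

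It then remains only to apply Theorem \ref{mainthm} to the stationary equation $Pu=F(u)$ in dimension $d=1$ with $n=0$. This produces a holomorphic extension of $u$ to the sector \eqref{sector} on which the exponential estimate \eqref{in0bis} holds, which is the assertion of the theorem.
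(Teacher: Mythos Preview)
Your proposal is correct and follows essentially the same route as the paper: derive the stationary equation $p(D)u+(V-1)u=F(u)$ (resp.\ $Vp(D)u+(V-1)u=F(u)$), check $G$-ellipticity and the analytic symbol estimates for the resulting Fourier multiplier, invoke the Bona--Li theory \cite{BL2} to upgrade $u\in L^\infty$ with $u\to0$ to the Sobolev regularity and weighted $L^2$ decay required by Theorem~\ref{mainthm}, and then apply that theorem. The paper phrases the Bona--Li input slightly differently---it checks that $K(\xi)=1/P(\xi)$ satisfies $|K(\xi)|\leq C\langle\xi\rangle^{-m}$ and $K\in H^\infty(\R)$, citing specifically \cite[Theorem 3.1.2, Corollary 4.1.6]{BL2}---whereas you describe the Paley--Wiener/bootstrap mechanism behind those results; but the substance is the same.
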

Notice that the estimates
\eqref{simbest} are satisfied
by any polynomial $p(\xi)$ of
degree $m$. More generally,
the condition \eqref{simbest} is
equivalent to saying that
$p(\xi)$ extends to a
holomorphic function
$p(\xi+i\eta)$ in a sector of
the type \eqref{sector}, and
satisfies there the bounds
$|p(\xi+i\eta)|\leq
C'\langle\xi\rangle^{m}$ (see
Proposition \ref{carat}
below). This remark makes
\eqref{simbest} very easy to
check in concrete situations,
where typically $p(\xi)$ is
expressed in terms of
elementary functions.\par
 We also
observe that, under the hypotheses of
Theorem \ref{applthm}, we already know
from \cite{BL2} that $u$ extends to a
holomorphic function on a strip and
displays there an exponential decay of
type \eqref{in0bis}, hence Theorem
\ref{applthm} can be regarded as an
improvement of this result. For the
{\it existence} of solitary waves for
these equations we refer to the
detailed analysis in Albert, Bona and
Saut \cite{albert}, Amick and Toland
\cite{amick}, Benjamin, Bona and Bose
\cite{benjamin}, Weinstein
\cite{weinstein}. \par Finally we
mention that similar extensions results
should hopefully be valid for other
classes of non-linear elliptic
equations, e.g. with linear part
$P=-\Delta+|x|^2$ (cf. \cite{A18}).
Similarly, even non-elliptic
hypoelliptic operators and dispersion managed solitons could share
similar properties, possibly with the
sector replaced by a smaller domain
(but larger than a strip). We plan to
investigate these issues in a
subsequent paper.\par\medskip The paper
is organized as follows. Section
\ref{prelimi} collects notation and
some preliminary results about
$G$-pseudodifferential operators
(composition, boundedness on Sobolev
spaces, parametrices, etc.). In Section
\ref{secspazi} we introduce a suitable
space of analytic functions on $\rd$,
which admit a holomorphic extension to
sectors in $\mathbb{C}^d$, and we prove
some relevant properties used in the
sequel. In Section \ref{secdim} we
prove Theorem \ref{mainthm}. The proof
is based on the application of an
iterative Picard scheme in the space of
analytic functions defined in Section
\ref{secspazi}. In Section
\ref{secesempi} we prove Theorem
\ref{applthm} and treat in detail the
linear case $F[u]=0$, see Theorem
\ref{linearthm}. We moreover show some
other examples and counterexamples and
test on them the sharpness of our
results.

\section{Notation and preliminary
results}\label{prelimi}
\subsection{Factorial and
binomial coefficients} We use
the usual multi-index
notation for factorial and
binomial coefficients. Hence,
for
$\alpha=(\alpha_1,\ldots,\alpha_d)\in\mathbb{N}^d$
we set
$\alpha!=\alpha_1!\ldots\alpha_d!$
and for
$\beta,\alpha\in\mathbb{N}^d$,
$\beta\leq\alpha$, we set
$\binom{\alpha}{\beta}=\frac{\alpha!}{\beta!(\alpha-\beta)!}$.\par
The following inequality is
standard and used often in
the sequel:
\begin{equation}\label{pr6}
\binom{\alpha}{\beta}\leq
2^{|\alpha|}.
\end{equation}
Also, we recall the identity
$$ \sum_{|\alpha'|=j\atop \alpha'\leq\alpha}\binom{\alpha}{\alpha'} = \binom{|\alpha|}{j}, \qquad j=0,1,\ldots, |\alpha|,$$
which follows from $\prod _{i=1}^d
(1+t)^{\alpha_i}=(1+t)^{|\alpha|}$, and
gives in particular
\begin{equation}\label{pr4}
\binom{\alpha}{\beta}\leq\binom{|\alpha|}{|\beta|},\quad
\alpha,\beta\in\mathbb{N}^d,\
\beta\leq\alpha.
\end{equation}
The last estimate implies in turn, by induction,
\begin{equation}\label{pr8}
\frac{\alpha!}{\delta_1!\ldots\delta_j!}\leq
\frac{|\alpha|!}{|\delta_1|!\ldots|\delta_j|!},\quad\alpha=\delta_1+\ldots+\delta_j,
\end{equation}
as well as
\begin{equation}\label{pr5}
\frac{\alpha!}{(\alpha-\beta)!}\leq
\frac{|\alpha|!}{|\alpha-\beta|!},\quad
\beta\leq\alpha.
\end{equation}
Finally we recall the
so-called inverse Leibniz'
formula:
\begin{equation}\label{pr3}
x^\beta\partial^\alpha
u(x)=\sum_{\gamma\leq\beta,\,\gamma\leq\alpha}\frac{(-1)^{|\gamma|}\beta!}{(\beta-\gamma)!}
\binom{\alpha}{\gamma}\partial^{\alpha-\gamma}(x^{\beta-\gamma}u(x)).
\end{equation}
\subsection{$G$-Pseudo-differential
operators}
Pseudo-differential operators
are formally represented as
integral operators of the
type
\begin{equation}\label{pr0}
p(x,D)u(x)=(2\pi)^{-d}\int_{\R^d}
e^{ix\xi}
p(x,\xi)\widehat{u}(\xi)\,d\xi,
\end{equation}
where \[
\widehat{u}(\xi)=\int_{\R^d}
e^{-ix\xi} u(x)\,dx
\]
 denotes
the Fourier transform of $u$
and $p(x,\xi)$ is the
so-called {\it symbol} of
$p(x,D)$. According to the
symbol spaces which $p$
belongs to, one can consider
$u$ in several classes of
functions or distributions
and symbolic calculi and
boundedness results on
Sobolev spaces are available.
We briefly recall this for
the class of the so-called
$G$-pseudodifferential
operators (also na\-med $SG$
or {\it scattering}
pseudodifferential operators
in the literature). They are
defined by the formula
\eqref{pr0}, where $p(x,\xi)$
satisfies, for some
$m,n\in\R$,
 the following
estimates: for every
$\alpha,\beta\in\mathbb{N}^d$
there exists a constant
$C_{\alpha,\beta}>0$ such
that
\begin{equation}\label{defing}
|\partial^\alpha_\xi\partial^\beta_x
p(x,\xi)|\leq
C_{\alpha,\beta}\langle
x\rangle^{n-|\beta|}\langle\xi\rangle^{m-|\alpha|}
\end{equation}
for every $x,\xi\in\rd$.
The space of functions
satisfying these estimates is
denoted by
 $G^{m,n}(\rd)$, whereas we set
 ${\rm OP}G^{m,n}(\rd)$ for
 the corresponding
 operators.{}{}
 We endow $G^{m,n}(\rd)$ with the topology defined by the seminorms
 \[
 \|p\|^{(G)}_N=\sup_{|\alpha|+|\beta|\leq
 N}\sup_{(x,\xi)\in\rdd}\big\{|\partial^\alpha_\xi\partial^\beta_x
p(x,\xi)|\langle
x\rangle^{-n+|\beta|}\langle\xi\rangle^{-m+|\alpha|}\big\},\quad
N\in\mathbb{N}.
\]
As a prototype one can take
$P=-\Delta+\lambda$,
$\lambda\in\mathbb{C}$, of order
$m=2$, $n=0$. More generally,
the case of Fourier
multipliers, where the symbol
$p(\xi)$ depends only on
$\xi$ (hence $n=0$) is of
great interest, mostly for
applications to solitary
waves and ground state
equations, see \cite{A39}. As
another example, we have
$x^\beta\partial^\alpha
\in{\rm
OP}G^{|\alpha|,|\beta|}(\rd)$.
\par The classes ${\rm OP}G^{m,n}(\rd)$
 were introduced
 in \cite{rD1} and studied in detail in \cite{rD2}, \cite{ES}, \cite{Me}, \cite{Schrohe:2}.
They are in fact a particular case of
the general
  H\"ormander's classes, see
  \cite[Chapter XVIII]{hormanderIII}, and turn out to be very convenient for
 a series of problems involving global aspects of partial
 differential equations in
  $\rd$.\par
   We now summarize
  some properties which
will be useful for us later
on; beside the above mentioned papers, we
refer to \cite[Chapter 3]{nicola} for a detailed and self-contained presentation.\par
First, if $p\in G^{m,n}(\rd)$
then $p(x,D)$ defines a
continuous map
$\cS(\rd)\to\cS(\rd)$ which
extends to a continuous map
$\cS'(\rd)\to\cS'(\rd)$. The
composition of two such
operators is therefore well
defined in $\cS(\rd)$ and in
$\cS'(\rd)$; more precisely,
if $p_1\in G^{m_1,n_1}(\rd)$
and $p_2\in G^{m_2,n_2}(\rd),$
then
$p_1(x,D)p_2(x,D)=p_3(x,D)$
with $p_3\in
G^{m_1+m_2,n_1+n_2}(\rd)$ and
the map $(p_1,p_2)\mapsto
p_3$ is continuous
$G^{m_1,n_1}(\rd)\times
G^{m_2,n_2}(\rd)\to
G^{m_1+m_2,n_1+n_2}(\rd)$.
\par
A symbol $p\in G^{m,n}(\rd)$
(and the corresponding
operator) is called {\it
$G$-elliptic} if for some
constants $C,c>0$, it
satisfies
\begin{equation}\label{Gell2}
|p(x,\xi)|\geq c\langle
x\rangle^n\langle\xi\rangle^m,\quad
|x|+|\xi|\geq C.
\end{equation}
For example,
$P=-\Delta+\lambda$ is
$G$-elliptic if and only if
$\lambda\not\in\R_{-}\cup\{0\}$.
More generally, for an
operator with polynomial
coefficients as in
\eqref{in1bis},
$G$-ellipticity is equivalent
to \eqref{in1}, \eqref{in2},
\eqref{in3} to hold
simultaneously.\par
$G$-ellipticity guaranties
the existence of a parametrix
$E\in{\rm OP}G^{-m,-n}(\rd)$
of $P=p(x,D)$, namely
$EP=I+R$ and $PE=I+R'$, where
$R,R'$ are (globally)
regularizing
pseudodifferential
operators, i.e. with Schwartz
symbols. Hence $R$ and $R'$
are continuous maps
$\cS'(\rd)\to\cS(\rd)$.\par
It is useful to consider the
action of such operators
on the standard
Sobolev spaces
\[
H^{s}(\rd)=\{ u\in\cS'(\rd):\
\|u\|_s:=\left(\int
|\widehat{u}(\xi)|^2
(1+|\xi|^2)^s\,d\xi\right)^{1/2}<\infty\},
\]
and on the weighted versions
\[
H^{s_1,s_2}(\rd)=\{
u\in\cS'(\rd):\
\|u\|_{s_1,s_2}:=\|\langle
x\rangle^{s_2}
u\|_{s_1}<\infty\}.
\]
Indeed, if $p\in
G^{m,n}(\rd)$ then
\begin{equation}\label{contin}
p(x,D):H^{s_1,s_2}(\rd)\to
H^{s_1-m,s_2-n}(\rd)
\end{equation}
continuously, and
\[
\|p(x,D)\|_{\mathcal{B}(H^{s_1,s_2}(\rd),H^{s_1-m_1,s_2-m_2}(\rd))}\leq
C\|p\|^{(G)}_{N}
\]
for suitable $C>0$,
$N\in\mathbb{N}$ depending
only on $s_1,s_2,m_1,m_2$ and on the dimension $d$. In
particular, for $s_2=0$ we
see that, if $n\leq0$ then
$p(x,D):H^{s}(\rd)\to
H^{s-m}(\rd)$ continuously
for every $s\in\R$.

\begin{remark} \label{rem1} The
complex interpolation for the spaces
$H^{s_1,s_2}(\rd)$ works as
one expects, i.e. for
$s_1,s_2,t_1,t_2\in\R$,
$0<\theta<1$,
\begin{equation}\label{interp}
[H^{s_1,t_1}(\rd),H^{s_2,t_2}(\rd)]_\theta=H^{s,t}(\rd),\quad
s=(1-\theta)s_1+\theta s_2,\
t=(1-\theta)t_1+\theta t_2,
\end{equation}
see for example \cite{F-G,triebel}. The property \eqref{interp} will be useful in the sequel in
view of the following consequence: {\it
suppose that $u\in H^s(\rd)$ and
$\langle x\rangle^{\eps_0} u\in
L^2(\rd)$ for some $\eps_0>0$. Then for
every $s'<s$ there exists $\eps>0$ such
that $\langle x\rangle^\eps u\in
H^{s'}(\rd)$.}
\end{remark}
  Finally
we point out for further
reference the following
formulas, which can be
verified by a direct
computation: for
$\alpha,\beta\in\mathbb{N}^d$,
\begin{equation}\label{pr1}
x^\beta
Pu=\sum_{\gamma\leq\beta}
(-1)^{|\gamma|}\binom{\beta}{\gamma}(D^\gamma_\xi
p)(x,D)(x^{\beta-\gamma}u),
\end{equation}
\begin{equation}\label{pr2}
\partial^\alpha Pu=\sum_{\delta\leq\alpha}
\binom{\alpha}{\delta}(\partial^\delta_x
p)(x,D)\partial^{\alpha-\delta}u.
\end{equation}
\section{A space of analytic
functions}\label{secspazi} We
introduce here a space of
analytic functions in $\R^d$, already considered
 in \cite{A38} (and denoted there by $S^{1\star}_1(\rd)$),
which is tailored to the
problem of the holomorphic
extension to subsets of
$\mathbb{C}^d$ of the form
\eqref{in6}, as shown by the
subsequent
 Theorem \ref{estensione}.
\begin{definition}\label{classisg}
We denote by $\sg$ the space of all
functions $f\in C^\infty(\rd)$
satisfying the following condition:
there exists a constant $C>0$ such that
\begin{equation}\label{cla0}
|x^\beta\partial^\alpha f(x)|\leq
C^{|\alpha|+|\beta|+1}\mab, \quad{\rm for\ all}\ \alpha,\,\beta\in\N^d.
\end{equation}
\end{definition}
\begin{theorem}\label{estensione}
Let $f\in\sg$. Then $f$ extends to a
holomorphic function $f(x+iy)$ in the
sector of $\mathbb{C}^d$
\begin{equation}\label{cla3}
\mathcal{C}_\eps=\{z=x+iy\in\mathbb{C}^d:\ |y|\leq\eps(1+|x|)\}
\end{equation}
for some $\eps>0$, satisfying there the estimates
\begin{equation}\label{cla4}
|f(x+iy)|\leq Ce^{-c|x|},
\end{equation}
for some constants $C>0$, $c>0$.
\begin{proof}
First we show the estimates
\begin{equation}\label{cla5}
|x^\beta\partial^\alpha f(x)|\leq C^{|\alpha|+1}|\alpha|! e^{-c|x|},\quad {\rm for}\ |\beta|\leq|\alpha|.
\end{equation}
Indeed, since $|x|^n\leq
k^n\sum_{|\gamma|=n}|x^\gamma|$
for a constant $k>0$
depending only on the
dimension $d$, by
\eqref{cla0} we have
(assuming $C\geq1$ in
\eqref{cla0})
\begin{align*}
e^{c|x|} |x^\beta\partial^\alpha
f(x)|&=\sum_{n=0}^\infty
\frac{(c|x|)^n}{n!}|x^\beta\partial^\alpha
 f(x)|\\
 &\leq \sum_{n=0}^\infty (ck)^n\sum_{|\gamma|=n}\frac{1}{|\gamma|!}|x^{\beta+\gamma}\partial^\alpha f(x)|\\
 &\leq \sum_{n=0}^\infty (ck)^n\sum_{|\gamma|=n} C^{2|\alpha|+|\gamma|+1}\frac{(|\alpha|+|\gamma|)!}{|\gamma|!}.
 \end{align*}
 Since the number of multi-indices $\gamma$ satisfying
  $|\gamma|=n$ does not exceed $2^{d+n-1}$, an application of \eqref{pr6} gives \eqref{cla5} for a new constant $C$, if $c$ is small enough.\par
 Now, \eqref{cla5} and the estimate $|\alpha|!\leq d^{|\alpha|}\alpha!$ give
  \begin{equation}\label{cla6}
 |\partial^\alpha f(x)|\leq C^{|\alpha|+1}\alpha!\langle x\rangle^{-|\alpha|}e^{-c|x|},
 \end{equation}
  for a new constant $C>0$. This shows that the power series
  \begin{equation}\label{cla7}
  \sum_{\alpha}\frac{\partial^\alpha f(x)}{\alpha!}(z-x)^\alpha,
  \end{equation}
  for any fixed $x\in\R^d$ converges in a polydisc in $\mathbb{C}^d$ defined by $|z_k-x_k|<\frac{\langle x\rangle}{2C}$, $1\leq k\leq d$. The union of such polydiscs, when $x$ varies in $\R^d$, cover a subset $\mathcal{C}_\eps\subset\mathbb{C}^d$ of the type \eqref{cla3}, for some $\eps>0$. Since on the
   intersection (when not empty) of two such polydiscs
   these extensions agree ($\rd\subset\mathbb{C}^d$ is totally real), $f(x)$ extends to a holomorphic function
    on $\mathcal{C}_\eps$. For $z\in \mathcal{C}_\eps$,
    using the representation \eqref{cla7} as a power
     series with $x={\rm Re}\,z$ and \eqref{cla6},
      we also get the desired estimate \eqref{cla4} for a new constant $C>0$.
\end{proof}
\end{theorem}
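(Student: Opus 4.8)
The plan is to construct the holomorphic extension \emph{locally}, by summing the Taylor series of $f$ centred at each real point $x$, and then to glue these local pieces together. The decisive quantitative input will be a pointwise estimate on $\partial^\alpha f(x)$ that simultaneously carries a Cauchy-type analytic factor $\alpha!$ and an exponential decay $e^{-c|x|}$, and in which the radius of convergence of the Taylor series at $x$ grows \emph{linearly} in $\langle x\rangle$. This linear growth is precisely what upgrades the classical strip to the sector \eqref{cla3}.

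The heart of the argument, and the step I expect to be the main obstacle, is to pass from the mixed bound \eqref{cla0} to the weighted decay estimate
\[
|x^\beta\partial^\alpha f(x)|\leq C^{|\alpha|+1}|\alpha|!\,e^{-c|x|},\qquad |\beta|\leq|\alpha|,
\]
for suitable $C,c>0$. The subtle point is that \eqref{cla0} controls $x^\beta\partial^\alpha f$ by $\mab$ rather than by the product $\alpha!\,\beta!$; it is exactly this gain of a single maximum in place of a product of factorials that lets one convert an \emph{arbitrarily high} power of $x$ into exponential decay while retaining only one factorial in $|\alpha|$. To make this precise I would multiply by $e^{c|x|}$, expand the exponential as $\sum_n (c|x|)^n/n!$, and bound each $|x|^n$ by a constant-power sum of monomials $\sum_{|\gamma|=n}|x^\gamma|$. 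Feeding these monomials into \eqref{cla0} applied to $x^{\beta+\gamma}\partial^\alpha f$, and using $\max\{|\alpha|,|\beta|+n\}\leq|\alpha|+n$ together with the binomial estimate \eqref{pr6} and a count of the multi-indices of length $n$, reduces the whole sum to a convergent geometric series in $n$, as long as $c$ is chosen small relative to $C$. Summing gives the displayed estimate.

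Once this is in hand the remaining steps are comparatively routine. Combining the cases $|\beta|=0$ and $|\beta|=|\alpha|$ recovers a full weight $\langle x\rangle^{|\alpha|}$, and replacing $|\alpha|!$ by $\alpha!$ via $|\alpha|!\leq d^{|\alpha|}\alpha!$ (absorbing $d^{|\alpha|}$ into the constant) yields
\[
|\partial^\alpha f(x)|\leq C^{|\alpha|+1}\alpha!\,\langle x\rangle^{-|\alpha|}e^{-c|x|}.
\]
Dividing by $\alpha!$ shows that the Taylor series of $f$ centred at a fixed $x$ has terms dominated by $C\,(C\langle x\rangle^{-1})^{|\alpha|}e^{-c|x|}\,|(z-x)^\alpha|$, hence converges absolutely whenever $|z_k-x_k|<\langle x\rangle/(2C)$ for every $k$. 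This defines a holomorphic extension of $f$ on a polydisc $P_x$ whose polyradius is comparable to $\langle x\rangle$.

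It then remains to check coverage and compatibility. For coverage, as $x$ ranges over $\rd$ the union $\bigcup_x P_x$ contains a sector $\mathcal{C}_\eps$ of the form \eqref{cla3}, because the imaginary reach $\langle x\rangle/(2C)$ of $P_x$ grows linearly in $|x|$. For compatibility, on the (nonempty) intersection of two such polydiscs the two power-series extensions agree, since they coincide on the totally real slice $\rd\subset\C^d$ and a holomorphic function on a connected open set is determined by its values on a totally real submanifold of maximal dimension; the local pieces therefore glue to a single holomorphic function $f$ on $\mathcal{C}_\eps$. The exponential decay \eqref{cla4} finally follows by evaluating the Taylor series with centre $x=\mathrm{Re}\,z$ and summing the weighted bound above, which produces $|f(x+iy)|\leq C e^{-c|x|}$ after a harmless adjustment of the constants.
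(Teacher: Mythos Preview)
Your proposal is correct and follows essentially the same route as the paper's proof: the exponential-series trick to upgrade \eqref{cla0} to the decay estimate $|x^\beta\partial^\alpha f|\leq C^{|\alpha|+1}|\alpha|!\,e^{-c|x|}$ for $|\beta|\leq|\alpha|$, the passage to $|\partial^\alpha f(x)|\leq C^{|\alpha|+1}\alpha!\,\langle x\rangle^{-|\alpha|}e^{-c|x|}$, the Taylor-series extension on polydiscs of radius $\sim\langle x\rangle$, the gluing via the totally real slice, and the decay via the expansion centred at $x=\mathrm{Re}\,z$ are all exactly as in the paper. Your remark that one combines the cases $|\beta|=0$ and $|\beta|=|\alpha|$ to obtain the full $\langle x\rangle^{|\alpha|}$ weight makes explicit a step the paper leaves implicit.
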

In the sequel we will use the
following characterization of
the space $\sg$ in terms of
$H^s$-based norms.\par Set,
for $f\in\cS'(\rd)$,
\begin{equation}\label{accaenne2}
S_\infty^{s,\eps}[f]=\sum_{\al,\,\beta}\frac{\eab}{\mab} \|x^\be\partial^\al f\|_s.
\end{equation}
\begin{proposition}\label{stima0}
Let $f\in\sg$. Then for every
$s\geq0$ there exists $\eps>0$ such
that $S^{s,\eps}_\infty[f]<\infty$.\par
In the opposite direction, if for some $s\geq0$ there exists $\eps>0$ such that $S^{s,\eps}_{\infty}[f]<\infty$ then $f\in\sg$.
\end{proposition}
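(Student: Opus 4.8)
The plan is to route both implications through a single $H^s$-analogue of the defining estimate \eqref{cla0}: I claim that $f\in\sg$ if and only if, for every (equivalently, for some) integer $s\geq0$, there is a constant $C>0$ with
\[
\|x^\beta\partial^\alpha f\|_s\leq C^{|\alpha|+|\beta|+1}\,\mab\qquad\text{for all }\alpha,\beta\in\N^d.
\]
Since $\|\cdot\|_s\leq\|\cdot\|_{s'}$ for $s\leq s'$, it suffices to work with nonnegative integers $s$ throughout. Granting the displayed $H^s$ estimate, both halves of the Proposition are immediate from summing geometric series: on the one hand, the $\mab$ cancels and
\[
S^{s,\eps}_\infty[f]\leq C\sum_{\alpha,\beta}(C\eps)^{|\alpha|+|\beta|}=C\Big(\sum_\alpha (C\eps)^{|\alpha|}\Big)^2,
\]
which converges as soon as $\eps<1/(2C)$, because the number of multi-indices $\gamma$ with $|\gamma|=j$ does not exceed $2^{d+j-1}$; on the other hand, if $S^{s,\eps}_\infty[f]=:M<\infty$ then each nonnegative summand obeys $\eab(\mab)^{-1}\|x^\beta\partial^\alpha f\|_s\leq M$, which is the $H^s$ estimate with $C=\max\{\eps^{-1},M\}$.

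For the forward direction ($f\in\sg\Rightarrow H^s$ estimate for every $s$) I would first establish the $L^2$ bound $\|x^\beta\partial^\alpha f\|_{L^2}\leq C_1^{|\alpha|+|\beta|+1}\mab$ using \eqref{cla0} alone. The device is to manufacture polynomial decay: writing $\langle x\rangle^{2q}=(1+|x|^2)^q$ as a finite sum of monomials $x^{2\delta}$, $|\delta|\leq q$, with coefficients summing to $(d+1)^q$, one gets $\langle x\rangle^{2q}x^\beta\partial^\alpha f=\sum_{|\delta|\leq q}c_\delta\,x^{\beta+2\delta}\partial^\alpha f$, and \eqref{cla0} bounds each summand by $C_0^{|\alpha|+|\beta|+2q+1}\max\{|\alpha|,|\beta|+2|\delta|\}!$. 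Since $\max\{|\alpha|,|\beta|+2|\delta|\}\leq\max\{|\alpha|,|\beta|\}+2q$ and $(N+2q)!\leq 2^{N+2q}(2q)!\,N!$ (a one–dimensional instance of \eqref{pr6}), the shift costs only a harmless factor $2^{|\alpha|+|\beta|}$, yielding
\[
|x^\beta\partial^\alpha f(x)|\leq \tilde C_q^{\,|\alpha|+|\beta|+1}\,\mab\,\langle x\rangle^{-2q}.
\]
Choosing a fixed $q=q_0$ with $4q_0>d$ makes $\langle x\rangle^{-2q_0}\in L^2(\rd)$, and multiplying by $\|\langle x\rangle^{-2q_0}\|_{L^2}$ gives the desired $L^2$ estimate.

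To pass from $L^2$ to $H^s$ I would use $\|g\|_s\leq C_d\sum_{|\mu|\leq s}\|\partial^\mu g\|_{L^2}$ together with the Leibniz expansion $\partial^\mu(x^\beta\partial^\alpha f)=\sum_{\nu\leq\mu,\,\nu\leq\beta}\binom{\mu}{\nu}\frac{\beta!}{(\beta-\nu)!}\,x^{\beta-\nu}\partial^{\alpha+\mu-\nu}f$; each term falls under the $L^2$ estimate with $\alpha'=\alpha+\mu-\nu$, $\beta'=\beta-\nu$ satisfying $|\alpha'|\leq|\alpha|+s$, $|\beta'|\leq|\beta|$, and the factorial shift $\max\{|\alpha|+s,|\beta|\}!\leq 2^{\max+s}s!\,\mab$, the binomial $\binom{\mu}{\nu}\leq 2^s$, and the polynomial factor $\beta!/(\beta-\nu)!\leq|\beta|^{s}\leq C_s2^{|\beta|}$ are all absorbed into the geometric constant (the finitely many $\mu,\nu$ contribute only an $s,d$–dependent multiplicity). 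The converse runs the same bookkeeping in reverse: from the $H^s$ estimate (read off from the finiteness of $S^{s,\eps}_\infty[f]$, taking $\eps<1$ without loss), pick an integer $k>d/2$ and apply the Sobolev inequality $\|g\|_{L^\infty}\leq C_{k,d}\sum_{|\mu|\leq k}\|\partial^\mu g\|_{L^2}\leq C_{k,d}\sum_{|\mu|\leq k}\|\partial^\mu g\|_s$ to $g=x^\beta\partial^\alpha f$; the same Leibniz reduction then returns \eqref{cla0}, i.e. $f\in\sg$.

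The one genuinely delicate point is the forward direction's preservation of the $\max\{|\alpha|,|\beta|\}!$ structure. A naive use of the decay estimate \eqref{cla6} would pit the excess powers of $x$ against $\partial^\alpha f$ and, in the regime $|\beta|>|\alpha|$, produce an unwanted product $|\alpha|!\,|\beta|!$ rather than the single factorial $\max\{|\alpha|,|\beta|\}!$. The $\langle x\rangle^{2q}$ trick sidesteps this precisely because \eqref{cla0} holds for all $\alpha,\beta$ simultaneously, so enlarging $|\beta|$ by the fixed amount $2q$ merely shifts the argument of the $\max$ by a constant; notably, this makes the forward direction self-contained from \eqref{cla0}, independent of the exponential decay proved in Theorem \ref{estensione}. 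The low-regularity case $s\leq d/2$ in the converse, absorbed by taking $k>d/2$ extra derivatives, is the only other place requiring mild care and presents no real difficulty.
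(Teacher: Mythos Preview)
Your argument is correct and follows essentially the same route as the paper's proof: both directions hinge on expanding $\partial^\mu(x^\beta\partial^\alpha f)$ by Leibniz, inserting a polynomial weight $(1+|x|^2)^M$ to pass between $L^2$ and $L^\infty$, and absorbing the resulting shift $\max\{|\alpha|,|\beta|\}+{\rm const}$ in the factorial via \eqref{pr6} together with $|\beta|^{|\gamma|}\leq C_\gamma^{|\beta|}$. The only cosmetic difference is that you first isolate the $L^2$ estimate and then upgrade to $H^s$, whereas the paper combines the two steps into one pass; the key ingredients and bookkeeping are identical.
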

\begin{proof}
Assume $f\in\sg$. It suffices to argue when $s=k$ is integer.
Then
\[
\|x^\be\partial^\al f\|_k\leq
C'\sum_{|\gamma|\leq
k}\|\partial^\gamma\big(x^\be\partial^\al
f\big)\|_{L^2}.
\]
Now, if $M\in\mathbb{N}$ satisfies
$M>d/4$ we have
\begin{equation}\label{cla2}
\|\partial^\gamma\big(x^\be\partial^\al
f\big)\|_{L^2}\leq
C''\|(1+|x|^2)^M\partial^\gamma\big(x^\be\partial^\al
f\big)\|_{L^\infty}.
\end{equation}
On the other hand we have, by Leibniz' formula and \eqref{cla0},
\begin{align*}
\|(1+|x|^2)^M\partial^\gamma\big(x^\be\partial^\al
f\big)\|_{L^\infty}&
\leq\sum_{\sigma\leq\gamma,\,\sigma\leq\beta}
\binom{\gamma}{\sigma}\frac{\beta!}{(\beta-\sigma)!}
\|(1+|x|^2)^M
x^{\be-\sigma}\partial^{\al+\gamma-\sigma}
f\|_{L^\infty}\\
&\leq C_\gamma|\beta|^{|\gamma|} C^{|\alpha|+|\beta|}\max\{2M+|\beta|,|\alpha|+|\gamma|\}!.
\end{align*}
Since
$\max\{2M+|\beta|,|\alpha|+|\gamma|\}\leq
\max\{|\beta|,|\alpha|\}+2M+|\gamma|$ and
$|\beta|^{|\gamma|}\leq
\tilde{C}_\gamma^{|\beta|}$, by
\eqref{pr6}
we get
\[
\|x^\be\partial^\al f\|_k\leq C_k^{|\alpha|+|\beta|+1}\mab
\]
for some constant $C_k>0$. Hence $S^{s,\eps}_\infty[f]<\infty$ if $\eps<C_k^{-1}$.\par
In the opposite direction, we may take $s=0$; hence assume $S^{0,\eps}_\infty[f]<\infty$ for some $\eps>0$. Then \eqref{cla0} holds with the $L^\infty$ norm replaced by the $L^2$ norm. If $M$ is an integer, $M>d/2$, we have
\[
\|x^\beta\partial^\alpha f\|_{L^\infty}\leq C\sum_{|\gamma|\leq M} \|\partial^\gamma\big(x^\beta\partial^\alpha f\big)\|_{L^2}.
\]
Hence an application of Leibniz' formula and the same arguments as above show that $f\in\sg$.
\end{proof}


\section{Proof of the main result (Theorem
\ref{mainthm})}\label{secdim}
In this section we prove
Theorem \ref{mainthm}. In
fact we shall state and prove
 this result for the more general non-homogeneous equation
\begin{equation}
\label{A5.1f} Pu=f+F[u],
\end{equation}
where $P$  and $F[u]$ satisfy
the assumptions of Theorem
\ref{mainthm} and $f$ is a
function in the space $\sg$ defined in Section 3.
Moreover we shall restate our
result in terms of estimates in $\sg$. Namely,
 in view of Theorem
 \ref{estensione},
it will be sufficient to
prove the following theorem.

\begin{theorem}\label{AA5.1}
Let $P$ be a
pseudodifferential operator
with symbol $p$ satisfying
\eqref{Gsymbols}, $m>0, n\geq
0$ and assume that $p$ is
$G$-elliptic, that is
\eqref{Gell} is satisfied.
Let $F[u]$ be of the form
\eqref{A5.2} (possibly with
some factors in the product
replaced by their conjugates)
and $f \in \sg.$ Assume
moreover that $u\in
H^s(\rd)$,
$s>\frac{d}{2}+\max\{|\rho_k|\}$,
is a solution of
\eqref{A5.1f}. In the case
$n=0$ assume further $\langle
x\rangle^{\epsilon_0} u\in
L^2(\rd)$ for some
$\epsilon_0>0$. Then
$u\in\sg$.
\end{theorem}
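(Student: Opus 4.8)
The plan is to reduce everything to a quantitative estimate on the weighted Sobolev norms $\|x^\beta\partial^\alpha u\|_s$ and to produce it by an iteration in the scale of analytic spaces underlying \eqref{accaenne2}. By Proposition~\ref{stima0} it suffices to exhibit one pair $(s,\eps)$, with $\eps>0$, for which $S^{s,\eps}_\infty[u]<\infty$; equivalently, to bound the finite partial sums $\Sigma_N:=\sum_{|\alpha|+|\beta|\le N}\frac{\eab}{\mab}\,\|x^\beta\partial^\alpha u\|_s$ uniformly in $N$. Before doing so I would first upgrade the hypotheses to $u\in\cS(\rd)$, so that all these norms are a priori finite and the only real issue is their \emph{rate} of growth. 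This preliminary step is the usual elliptic bootstrap: writing $EPu=u+Ru$ with $E\in{\rm OP}G^{-m,-n}(\rd)$ a parametrix and $R$ regularizing, one has $u=E(f+F[u])-Ru$; the hypothesis $s>\tfrac d2+\max\{|\rho_k|\}$ makes each factor $\partial^{\rho_k}u$ lie in an algebra $H^\sigma$, $\sigma>\tfrac d2$, so $F[u]$ gains Sobolev regularity while $Ru\in\cS(\rd)$. Iterating, and using Remark~\ref{rem1} to convert the weight $\langle x\rangle^{\eps_0}u\in L^2$ into a genuine gain of weighted regularity in the case $n=0$, one concludes $u\in\cS(\rd)$.

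For the quantitative part I would extract from $G$-ellipticity the analytic a priori estimate: for $v\in\cS(\rd)$,
\begin{equation*}
\|v\|_{H^{s_1,s_2}}\le C\big(\|Pv\|_{H^{s_1-m,s_2-n}}+\|v\|_{L^2}\big),
\end{equation*}
which follows by applying the parametrix $E$ and absorbing the regularizing remainder. The core computation is then to apply $x^\beta\partial^\alpha$ to \eqref{A5.1f} and commute it through $P$ by means of the exact formulas \eqref{pr1} and \eqref{pr2}: this produces $P(x^\beta\partial^\alpha u)$ on the left, plus commutator terms in which a \emph{derivative} $(\partial^\delta_x D^\gamma_\xi p)(x,D)$ of the symbol hits a strictly lower-order block $x^{\beta-\gamma}\partial^{\alpha-\delta}u$. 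Here the analytic symbol estimates \eqref{Gsymbols} are decisive: they bound $\partial^\delta_x D^\gamma_\xi p$ in $G^{m-|\gamma|,n-|\delta|}(\rd)$ with a norm $\le C^{|\gamma|+|\delta|+1}\gamma!\,\delta!$, so the operator-norm constant carries exactly the factorial weight needed to match the denominators $\mab$ in \eqref{accaenne2}. Inserting this into the a priori estimate yields a recursive inequality bounding $\|x^\beta\partial^\alpha u\|_s$ by the contribution of $x^\beta\partial^\alpha(f+F[u])$ (one order lower, hence absorbable) together with a weighted sum of lower-order quantities $\|x^{\beta'}\partial^{\alpha'}u\|_s$, $|\alpha'|+|\beta'|<|\alpha|+|\beta|$.

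It remains to handle the two inhomogeneous contributions and to close the recursion. Since $f\in\sg$, Proposition~\ref{stima0} gives $S^{s-m,\eps}_\infty[f]<\infty$, so $f$ is harmless. For the nonlinear term I would prove that the analytic scale is stable under multiplication: expanding $x^\beta\partial^\alpha\big(x^h\prod_k\partial^{\rho_k}u\big)$ by Leibniz and using the combinatorial inequalities \eqref{pr8} and \eqref{pr5}, one bounds the associated $S^{s,\eps}_\infty$-norm by a fixed power of $S^{s,\eps}_\infty[u]$, an algebra-type estimate. The constraints $|\rho_k|\le\max\{m-1,0\}$ and $|h|\le\max\{n-1,0\}$ are exactly what make each factor \emph{subprincipal}, so that the order gained by inverting $P$ strictly dominates the order lost in $F[u]$. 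Conceptually the resulting scheme is a Picard iteration in the Banach space $\{v:S^{s,\eps}_\infty[v]<\infty\}$, the map $v\mapsto E(f+F[v])$ being a contraction for $\eps$ small; concretely I would close it by bounding the partial sums $\Sigma_N$ inductively, the subprincipal surplus producing a convergent geometric series in $\eps$. This yields $S^{s,\eps}_\infty[u]<\infty$, that is $u\in\sg$ by Proposition~\ref{stima0}.

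\medskip\noindent The step I expect to be the main obstacle is precisely this last one — controlling the nonlinearity and closing the scheme. The difficulty is twofold: one must prove the algebra estimate with the \emph{asymmetric} weight $\mab$ (rather than $\alpha!\,\beta!$ or $(|\alpha|+|\beta|)!$), which forces a careful splitting of the Leibniz sums according to whether $|\alpha|$ or $|\beta|$ dominates; and one must choose $\eps$ small enough to beat \emph{simultaneously} the geometric constants $C^{|\alpha|+|\beta|}$ coming from \eqref{Gsymbols}, the algebra constant from $F$, and the combinatorial factors, uniformly in $N$. Ensuring that the subprincipal gain $(m-1,n-1)$ really turns the recursion into a convergent geometric series, and not merely an a priori bound, is the technical heart of the argument.
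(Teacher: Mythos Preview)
Your overall strategy matches the paper's: bootstrap to $u\in\cS(\rd)$, apply the parametrix $E$ to $x^\beta\partial^\alpha(Pu)$, control the commutator $E[P,x^\beta\partial^\alpha]$ via the analytic symbol bounds \eqref{Gsymbols} and the factorial weights, treat $F[u]$ by Leibniz with the case split $|\alpha|\gtrless|\beta|$, and close by induction on the partial sums $\Sigma_N$. You have also correctly flagged the asymmetric weight $\mab$ as the main combinatorial nuisance.

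There is, however, a genuine gap in the range $0<m<1$. Here $|\rho_k|=0$, so $F[u]=\sum_{h,l}F_{h,l}x^{h}u^\ell$, and the Leibniz expansion of $\partial^\alpha(u^\ell)$ contains the top-order piece $\ell\,u^{\ell-1}\partial^\alpha u$. Your scheme implicitly relies on peeling off one derivative and using that $E\partial_j$ is bounded on $H^s$ to drop $|\alpha|$ by one; but $E\partial_j\in{\rm OP}G^{1-m,-n}$ has positive $\xi$-order $1-m>0$, so no such boundedness holds. The ``subprincipal surplus $(m-1,n-1)$'' you invoke is therefore \emph{negative} in the $\xi$-direction, and the recursion does not collapse to a geometric series in $\eps$ alone. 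The paper handles this case separately: one uses the elementary inequality $\langle\xi\rangle^{-m}|\xi_j|\le\tau|\xi_j|+C_\tau$ with an auxiliary small parameter $\tau$, producing on the right a term $\tau\,C_s\|u\|_s^{\ell-1}S_N^{s,\eps}[u]$ of the \emph{same} level $N$. One first fixes $\tau$ small (depending on $\|u\|_s$) so that this term can be absorbed into the left-hand side, and only then shrinks $\eps$ (now depending on $\tau$ through $C_\tau$) to close the iteration. A parallel issue appears in the $x$-direction when $0\le n<1$: then $h=0$, and for the $\alpha=0$ terms the paper borrows decay from the remaining factors, using $\|\langle x\rangle^{1/(\ell-1)}u\|_s^{\ell-1}$ (finite since $u\in\cS(\rd)$) in place of $\|u\|_s^{\ell-1}$ to manufacture the missing factor of $\eps$.
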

In fact we always assume that
$F[u]$ has the form in
\eqref{A5.2}, and we leave to
the reader the easy changes
when some factors of the
product in \eqref{A5.2} are
replaced by their conjugates.
\par We start by showing that, under the assumptions of Theorem \ref{AA5.1}, 
$u$ is in fact a Schwartz
function.
\begin{lemma}
\label{moreregularity}
Let $P$, $m$, $n$, $F[u]$, $f$ be as in Theorem \ref{AA5.1}. Let $u$ be a solution of \eqref{A5.1f} satisfying $\langle x\rangle^{\eps_0}u\in H^s(\rd)$ for some $s>\frac{d}{2}+\max\{|\rho_k|\}$, $\eps_0\geq0$. Then, when  $n>0$, we have  $\px^{\eps_0+\sigma_2}\langle D \rangle^{\sigma_1}u \in H^s(\R^d)$  for every $\sigma_1 \leq \min\{m,1\}$, and $\sigma_2 \leq \min\{n,1\}$. If $n=0$ and we assume in addition $\eps_0>0$, then we have
$\px^{\ve_0+\sigma_2}\langle D\rangle^{\sigma_{1}}
  u \in H^s(\R^d)$ for every $\sigma_1\leq \min\{m,1\}$ and
  $\sigma_2 \leq \min\{\ve_0,1\}$.
\end{lemma}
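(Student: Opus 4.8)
The plan is to exploit the $G$-elliptic parametrix of $P$ in order to reduce the claim to a single nonlinear estimate for $F[u]$ in weighted Sobolev spaces, from which the gain in regularity and decay will follow simply by applying the parametrix. Since $p$ is $G$-elliptic, there is $E\in{\rm OP}G^{-m,-n}(\rd)$ with $EP=I+R$, where $R$ is regularizing, so that $R:\cS'(\rd)\to\cS(\rd)$. Applying $E$ to \eqref{A5.1f} yields the identity
\[
u=Ef+EF[u]-Ru.
\]
Here $Ru\in\cS(\rd)$ because $u\in\cS'(\rd)$ and $R$ is regularizing, while $Ef\in\cS(\rd)$ since $f\in\sg\subset\cS(\rd)$ and $E$ preserves $\cS(\rd)$; in particular these two terms lie in every weighted Sobolev space and are harmless. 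Everything therefore reduces to locating $EF[u]$.

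First I would estimate $F[u]$. By \eqref{contin} the operator $\partial^{\rho_k}\in{\rm OP}G^{|\rho_k|,0}(\rd)$ maps $H^{s,\eps_0}(\rd)$ into $H^{s-|\rho_k|,\eps_0}(\rd)\subseteq H^{t,\eps_0}(\rd)$, where $t:=s-\max_k|\rho_k|>d/2$ precisely by the hypothesis $s>\frac{d}{2}+\max_k|\rho_k|$. Since $H^t(\rd)$ is a Banach algebra for $t>d/2$, writing $\px^{l\eps_0}\prod_k\partial^{\rho_k}u=\prod_k\big(\px^{\eps_0}\partial^{\rho_k}u\big)$ shows that the product of $l$ such factors lies in $H^{t,l\eps_0}(\rd)$ (conjugating some factors changes nothing). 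Multiplication by $x^h\in{\rm OP}G^{0,|h|}(\rd)$ then lowers the weight by $|h|$, so each term of \eqref{A5.2}, and hence $F[u]$, belongs to $H^{t,\,l\eps_0-|h|}(\rd)$, with $l\geq2$ and $|h|\leq\max\{n-1,0\}$.

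Next I would apply $E\in{\rm OP}G^{-m,-n}(\rd)$, which by \eqref{contin} gains $m$ derivatives and $n$ powers of decay, giving $EF[u]\in H^{t+m,\,l\eps_0-|h|+n}(\rd)$. It then suffices to verify $t+m\geq s+\sigma_1$ and $l\eps_0-|h|+n\geq\eps_0+\sigma_2$ in each regime. For the regularity, the constraint $|\rho_k|\leq\max\{m-1,0\}$ gives $m-\max_k|\rho_k|\geq\min\{m,1\}\geq\sigma_1$, so $t+m\geq s+\sigma_1$. For the decay when $n>0$ one has $|h|\leq n-1$ and $l\geq2$, whence $l\eps_0-|h|+n\geq 2\eps_0+1\geq\eps_0+\sigma_2$ since $\sigma_2\leq\min\{n,1\}\leq1$; when $n=0$ and $\eps_0>0$, \eqref{A5.2} forces $h=0$, and $l\geq2$ gives $l\eps_0\geq\eps_0+\eps_0\geq\eps_0+\sigma_2$ because $\sigma_2\leq\min\{\eps_0,1\}\leq\eps_0$. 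In all cases $EF[u]\in H^{s+\sigma_1,\eps_0+\sigma_2}(\rd)$, and together with the Schwartz terms this gives $u\in H^{s+\sigma_1,\eps_0+\sigma_2}(\rd)$, which is exactly the stated membership $\px^{\eps_0+\sigma_2}\langle D\rangle^{\sigma_1}u\in H^s(\rd)$ (the two orderings of $\px$ and $\langle D\rangle$ define equivalent norms within the $G$-calculus, a $G$-elliptic operator of order $(s+\sigma_1,\eps_0+\sigma_2)$ characterizing $H^{s+\sigma_1,\eps_0+\sigma_2}(\rd)$).

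The main obstacle is the nonlinear estimate of the second paragraph: one must distribute the weight $\px^{\eps_0}$ evenly among the $l$ factors so as to keep each factor in $H^t(\rd)$ and then invoke the algebra property, while observing that the a priori regularity $s>\frac{d}{2}+\max_k|\rho_k|$ is exactly what forces $t>d/2$. The remainder is bookkeeping of orders in the $G$-calculus; the only genuinely arithmetic points are the two inequalities comparing the gain $(m,n)$ of the parametrix against the losses $(\max_k|\rho_k|,|h|)$ dictated by the structural constraints on $F[u]$ in \eqref{A5.2}, together with the factor $l\geq2$ that makes the weight strictly improve.
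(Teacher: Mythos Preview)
Your proof is correct and follows essentially the same parametrix-plus-Schauder strategy as the paper: write $u=Ef+EF[u]-Ru$, dispose of the linear terms, place each nonlinear term $x^h\prod_k\partial^{\rho_k}u$ in a weighted Sobolev space via the algebra property of $H^t$ with $t=s-\max_k|\rho_k|>d/2$, and then read off the gain from the order $(-m,-n)$ of $E$. The only cosmetic differences are that you invoke the fully regularizing remainder $R$ (the paper uses merely $R\in{\rm OP}G^{-1,-1}$, which suffices since $\sigma_1,\sigma_2\leq1$) and that you phrase the conclusion as $u\in H^{s+\sigma_1,\eps_0+\sigma_2}$ rather than estimating $\px^{\eps_0+\sigma_2}\langle D\rangle^{\sigma_1}u$ directly; one small caveat is that your decay check ``$|h|\leq n-1$'' literally covers only $n\geq1$, but for $0<n<1$ one has $h=0$ and the inequality $l\eps_0+n\geq\eps_0+\sigma_2$ holds for the same reasons.
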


\begin{proof}
We first consider the case
$n>0$. Let
$E\in{\rm OP}G^{-m,-n}(\rd)$
be a parametrix for $P$;
hence $R:=EP-I\in{\rm
OP}G^{-1,-1}(\rd)$. We have
from \eqref{A5.1f}
\begin{equation}\label{iteration}
\px^{\eps_0+\sigma_2}\langle
D\rangle^{\sigma_1}u=
\px^{\eps_0+\sigma_2}\langle
D\rangle^{\sigma_1}Ef
-\px^{\eps_0+\sigma_2}\langle
D\rangle^{\sigma_1}Ru +
\px^{\eps_0+\sigma_2}\langle
D\rangle^{\sigma_1}EF[u].\end{equation}
Since $\sigma_1\leq1$ and
$\sigma_2\leq1$, the operator
$\px^{\sigma_2}\langle
D\rangle^{\sigma_1} R\in{\rm
OP}G^{0,0}(\rd)$ is bounded
on $H^{s,\eps_0}(\rd)$; cf. \eqref{contin}. Taking also into account the assumptions on $f$ it follows
therefore that the $H^s$-norm
of the first two terms in the
right-hand side of
\eqref{iteration} is finite.
Concerning the last term,
observe that, by the
assumptions on
$\sigma_1,\sigma_2$ and $h$, the operator
$\px^{\sigma_2}\langle
D\rangle^{\sigma_1}\circ E
\circ x^h \in {\rm
OP}G^{-m+\sigma_1,
-n+\sigma_2+|h|}(\R^d)$
belongs in fact to $ {\rm
OP}G^{-\max\{m-1,0\},0}(\R^d)$.
As a consequence, since
$M:=\max\{|\rho_k|\}\leq\max\{m-1,0\}$,
it is bounded
$H^{s-M,\eps_0}(\rd)\to H^{s,\eps_0}(\rd)$.
Hence by Schauder's estimates
(recall that $s>d/2+M$) we
have
\begin{align*}
\|\px^{\eps_0+\sigma_2}\langle D\rangle^{\sigma_1}EF[u]\|_s
&\leq C_s \sum_l\sum_{\rho_1,\ldots,\rho_l}\|
\px^{\eps_0}\prod_{k=1}^{\ell}\partial^{\rho_k}u\|_{s-M}\\
&\leq C'_s \sum_l\sum_{\rho_1,\ldots,\rho_l}\|
\px^{l\eps_0}\prod_{k=1}^{\ell}\partial^{\rho_k}u\|_{s-M}\\
&
\leq C''_s\sum_l\sum_{\rho_1,\ldots,\rho_l}
\prod_{k=1}^{\ell}\|\px^{\eps_0}\partial^{\rho_k}u\|_{s-M}
\leq C'''_s\sum_l
\|\px^{\eps_0}u\|_s^{\ell}<\infty.
\end{align*} We treat now the case
$n=0$, hence $h=0$ in the
nonlinearity \eqref{A5.2}.
We consider again \eqref{iteration}.
For the terms
$\px^{\ve_o+\sigma_2}\langle
D\rangle^{\sigma_1}Ef$ and
$\px^{\ve_o+\sigma_2}\langle
D\rangle^{\sigma_1}Ru $ we
argue as before. For the
nonlinear term observe that,
since $M:=\max\{|\rho_k|\}\leq\max\{m-1,0\}$,
we have that $\langle
D\rangle^{\sigma_1} E\in{\rm
OP}G^{-\max\{m-1,0\},0}(\rd)$
is bounded
$H^{s-M,l\eps_0}(\rd)\to
H^{s,l\eps_0}(\rd)$, for every $l$;
see \eqref{contin}. Hence for $\sigma_2\leq \ve_0\leq\ve_0(\ell-1)$ we get
\begin{eqnarray*}\|\px^{\ve_0+\sigma_2}
\langle
D\rangle^{\sigma_1}EF[u] \|_s&\leq& C_s\sum_l \sum_{\rho_1,\ldots,\rho_l}\|  \px^{l\eps_0}
\langle
D\rangle^{\sigma_1}E\prod_{k=1}^l\partial^{\rho_k}u \|_s\\
&\leq& C'_s \sum_l \sum_{\rho_1,\ldots,\rho_l}\|\px^{l\eps_0}
\prod_{k=1}^{\ell}
\partial^{\rho_k}u\|_{s-M}
\nonumber \\&\leq&
C''_s\sum_l \sum_{\rho_1,\ldots,\rho_l}\prod_{k=1}^{\ell}\|
\px^{\ve_0}
\partial^{\rho_k}u\|_{s-M}\nonumber\\&\leq&
 C'''_s\sum_l
\|\px^{\ve_0}u\|_s^{\ell}<\infty,
\end{eqnarray*} where we
applied again Schauder's
estimate and, in the last
inequality, \eqref{contin} to
$\partial^{\rho_k}$.

\end{proof}

Let us observe that, when $n>0$, an iterated application of Lemma
\ref{moreregularity} shows
that, under the assumptions
of Theorem \ref{AA5.1},
$\px^{\tau_2}\langle
D\rangle^{\tau_1}u \in
H^s(\R^d)$
 for every $\tau_1>0, \tau_2>0$, that is $u\in\cS(\rd)$. The same is true when $n=0$, because the
assumptions $u\in H^s(\rd)$,
$s>d/2+\max\{|\rho_k|\}$, and
$\px^{\ve_0}u\in L^2(\R^d)$, $\eps_0>0$, imply that for new
values of $s$ and $\eps_0$ as
above we have
$\px^{\ve_0}u\in H^s(\R^d)$ (see Remark \ref{rem1}), and Lemma \ref{moreregularity} still allows us to upgrade regularity and decay.\par
In particular, the sum $S_N^{s,\ve}[u]$ is finite for every $N \in \N.$
\par
In order to prove Theorem
\ref{AA5.1} it suffices to
verify that
$S^{s,\eps}_\infty[u]<\infty$,
in view of Proposition
\ref{stima0}. This will be
achieved by an iteration
argument involving the
partial sum of the series in
\eqref{accaenne2}, that is
\begin{equation}\label{accaenne}
S_N^{s,\eps}[f]=\sum_{|\al|+|\beta|\leq
N}\frac{\eab}{\mab}
\|x^\be\partial^\al f\|_s.
\end{equation}
We shall treat separately the cases $m\geq 1$ and $0<m<1,$ since the study of the nonlinearity
requires different arguments.

\subsection{Proof of Theorem \ref{AA5.1}: the case $m \geq 1$}
We need
several estimates to which we
address now.
\begin{proposition}\label{stima1}
Let $R\in{\rm
OP}G^{-1,-1}(\rd)$. Then for
every $s\in\R$ there exists a
constant $C_s>0$ such that,
for every $\eps\leq1$,
\[
\sum_{0<|\alpha|+|\beta|\leq N}\frac{\eab}{\mab}\|R(x^\beta\partial^\alpha
u)\|_s\leq C_s\eps
S^{s,\eps}_{N-1}[u].
\]
\begin{proof}
We first estimate the terms
with $\alpha=0$, hence
$\beta\not=0$. Let
$j\in\{1,\ldots,d\}$ such
that $\beta_{j}\not=0$. Since
$R\circ x_{j}\in{\rm
OP}G^{-1,0}(\rd)$ is bounded
on $H^s(\rd)$ we
have\footnote{We denote by
$e_j$ the $j$th vector of the
standard basis of $\rd$.}
\[
\frac{\eps^{|\beta|}}{|\beta|!}
\|R(x^\beta u)\|_s\leq
C_s\eps\frac{\eps^{|\beta|-1}}{|\beta|!}\|x^{\beta-e_{j}}u\|_s.
\]
Similarly one argues if
$\beta=0$, $\alpha\not=0$. If
finally $\alpha\not=0$,
$\beta\not=0$, hence for some $j,k\in\{1,\ldots,d\}$, we have
$\alpha_{j}\not=0$,
$\beta_{k}\not=0$, we
write
\[
x^\beta\partial^\alpha=\partial_{j}\circ
x_{k}
x^{\beta-e_{k}}
\partial^{\alpha-e_{j}}-\beta_{j}
x^{\beta-e_{j}}\partial^{\alpha-e_{j}}
\]
and use the fact that
$R\,\partial_{j}\circ
x_{k}\in{\rm
OP}G^{0,0}(\rd)$ is bounded
on $H^s(\rd)$. We get
\begin{multline*}
\frac{\eab}{\mab}\|R(x^\beta\partial^\alpha
u)\|_s\leq
C_s\eps^2\frac{\eps^{|\alpha|+|\beta|-2}}{\max\{|\alpha|,|\beta|\}!}\|x^{\beta-e_k}
\partial^{\alpha-e_{j}}
u\|_s\\
+C_s\eps^2\frac{
\eps^{|\alpha|+|\beta|-2}}{\max\{|\alpha|-1,|\beta|-1\}!}\|x^{\beta-e_{j}}
\partial^{\alpha-e_{j}}u\|_s,
\end{multline*}
(we understand that the second term in the right-hand side is omitted if $\beta_j=0$). These estimates give at once
the desired result if
$\epsilon\leq1$.
\end{proof}
\end{proposition}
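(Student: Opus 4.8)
The plan is to use that $R\in{\rm OP}G^{-1,-1}(\rd)$ gains one order both in $\xi$ and in $x$, so that $R$ composed with a first–order monomial operator is of order zero and hence bounded on every $H^s(\rd)$ by the mapping property \eqref{contin}. Concretely I will use $R\circ x_k\in{\rm OP}G^{-1,0}(\rd)$, $R\circ\partial_j\in{\rm OP}G^{0,-1}(\rd)$ and $R\circ\partial_j\circ x_k\in{\rm OP}G^{0,0}(\rd)$, all bounded on $H^s(\rd)$. Peeling off one such factor from $x^\beta\partial^\alpha$ lowers the total order $|\alpha|+|\beta|$ by one (or two), and the corresponding power of $\eps$ is exactly what produces the gain $C_s\eps$ on the right-hand side.

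First I would treat the two extreme cases. If $\alpha=0$ (so $\beta\neq0$), I pick $j$ with $\beta_j\neq0$, factor $x^\beta=x_j\,x^{\beta-e_j}$, and use boundedness of $R\circ x_j$ on $H^s$ to bound $\|R(x^\beta u)\|_s\leq C_s\|x^{\beta-e_j}u\|_s$. Since $\eps^{|\beta|}/|\beta|!\leq\eps\cdot\eps^{|\beta|-1}/(|\beta|-1)!$, the corresponding summand is $\leq C_s\eps$ times the summand of $S^{s,\eps}_{N-1}[u]$ indexed by $(0,\beta-e_j)$. The case $\beta=0$, $\alpha\neq0$ is symmetric, using $R\circ\partial_j$.

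The heart of the argument is the mixed case $\alpha\neq0$, $\beta\neq0$. Picking $j,k$ with $\alpha_j\neq0$, $\beta_k\neq0$, I would commute one derivative through the monomial via the elementary operator identity $x^\beta\partial^\alpha=\partial_j\circ x_k\,x^{\beta-e_k}\partial^{\alpha-e_j}-\beta_j\,x^{\beta-e_j}\partial^{\alpha-e_j}$. Applying $R$ and bounding the first piece by boundedness of $R\circ\partial_j\circ x_k$ and the second by boundedness of $R$ itself, each resulting monomial has order $|\alpha|+|\beta|-2$, so two powers of $\eps$ are freed. For the normalizing factorials I would use $\max\{|\alpha|,|\beta|\}!\geq\max\{|\alpha|-1,|\beta|-1\}!$ and, to absorb the coefficient $\beta_j$, the bound $\beta_j\leq|\beta|\leq\max\{|\alpha|,|\beta|\}$, which gives $\beta_j/\max\{|\alpha|,|\beta|\}!\leq1/\max\{|\alpha|-1,|\beta|-1\}!$. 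Since $\eps\leq1$, the factor $\eps^2$ may be replaced by $\eps$.

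It remains to sum over all $(\alpha,\beta)$ with $0<|\alpha|+|\beta|\leq N$. Every summand has been dominated by $C_s\eps$ times one or two summands of $S^{s,\eps}_{N-1}[u]$, indexed by multi-indices obtained from $(\alpha,\beta)$ by subtracting one or two standard basis vectors; a fixed target index is reached from at most $d^2+d$ sources, so the reindexed sum is bounded by a dimensional constant times $\eps\,S^{s,\eps}_{N-1}[u]$. I expect the only genuinely delicate point to be the mixed case, namely keeping the $\max\{|\alpha|,|\beta|\}!$ normalization correctly aligned while both the derivative and the monomial drop in order; once the commutation identity and the two factorial inequalities above are in place, the rest is bookkeeping.
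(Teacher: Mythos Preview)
Your proposal is correct and follows essentially the same route as the paper: the same three-way case split, the same commutation identity $x^\beta\partial^\alpha=\partial_j\circ x_k\,x^{\beta-e_k}\partial^{\alpha-e_j}-\beta_j\,x^{\beta-e_j}\partial^{\alpha-e_j}$, and the same boundedness facts for $R\circ x_j$, $R\circ\partial_j$, and $R\circ\partial_j\circ x_k$. Your handling of the factorial normalization (in particular the bound $\beta_j/\max\{|\alpha|,|\beta|\}!\leq 1/\max\{|\alpha|-1,|\beta|-1\}!$) and the final finite-multiplicity reindexing are made a bit more explicit than in the paper, but the argument is the same.
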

\begin{proposition}\label{commutatore}
Let $P=p(x,D)$ be a
pseudodifferential with
symbol $p(x,\xi)$ satisfying
the estimates
\eqref{Gsymbols}, with
$m\geq0$, $n\geq0$. Let
$E\in{\rm OP}G^{-m,-n}(\rd)$.
Then for every $s\in\R$ there
exists a constant $C_s>0$
such that, for every
$\epsilon$ small enough and
$u\in \cS(\rd)$, we have
\begin{equation}\label{nn5}
\sum_{0<|\alpha|+|\beta|\leq N}\frac{\eab}{\mab}\|E[P,x^\beta
\partial^\alpha]u\|_s\leq C_s\eps
S^{s,\eps}_{N-1}[u].
\end{equation}
\end{proposition}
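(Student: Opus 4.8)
The plan is to reduce the proposition, via the triangle inequality, to an estimate for each of the operators $E(D_\xi^\gamma\partial_x^\delta p)(x,D)$ occurring in the explicit expansion of the commutator, and then to carry out a combinatorial bookkeeping against the normalization $1/\mab$. First I would compute $[P,x^\beta\partial^\alpha]u$ explicitly. Applying \eqref{pr2} to commute $\partial^\alpha$ through $P$, and then \eqref{pr1} with symbol $\partial_x^\delta p$ to commute $x^\beta$ through each operator $(\partial_x^\delta p)(x,D)$, one obtains
\[
x^\beta\partial^\alpha Pu=\sum_{\delta\le\alpha,\ \gamma\le\beta}(-1)^{|\gamma|}\binom\alpha\delta\binom\beta\gamma (D_\xi^\gamma\partial_x^\delta p)(x,D)\big(x^{\beta-\gamma}\partial^{\alpha-\delta}u\big).
\]
The term $(\gamma,\delta)=(0,0)$ is precisely $P(x^\beta\partial^\alpha u)$, so subtracting it gives
\[
[P,x^\beta\partial^\alpha]u=-\sum_{(\gamma,\delta)\neq(0,0)}(-1)^{|\gamma|}\binom\alpha\delta\binom\beta\gamma (D_\xi^\gamma\partial_x^\delta p)(x,D)\big(x^{\beta-\gamma}\partial^{\alpha-\delta}u\big).
\]

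Next I would record the order and the size of each operator. Since $\partial_\xi^\gamma\partial_x^\delta p\in G^{m-|\gamma|,n-|\delta|}(\rd)$ and $E\in{\rm OP}G^{-m,-n}(\rd)$, the composition rule yields $E(D_\xi^\gamma\partial_x^\delta p)(x,D)\in{\rm OP}G^{-|\gamma|,-|\delta|}(\rd)$. The decisive quantitative ingredient is the analytic estimate \eqref{Gsymbols}, which bounds the relevant $G^{m-|\gamma|,n-|\delta|}$-seminorms of $D_\xi^\gamma\partial_x^\delta p$ by $C^{|\gamma|+|\delta|+1}\gamma!\delta!$; combined with the continuity of the composition and the Sobolev mapping properties \eqref{contin}, this makes all operator norms of $E(D_\xi^\gamma\partial_x^\delta p)(x,D)$ that I will use at most $C_s^{|\gamma|+|\delta|+1}\gamma!\delta!$.

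The core step is to estimate $\|E(D_\xi^\gamma\partial_x^\delta p)(x,D)(x^{\beta-\gamma}\partial^{\alpha-\delta}u)\|_s$. The naive $H^s\to H^s$ bound is insufficient — it leaves an uncancelled power of $\max\{|\alpha|,|\beta|\}$ — so the essential point is to exploit the full smoothing of the order $(-|\gamma|,-|\delta|)$ operator: I would peel $|\gamma|$ derivatives and $|\delta|$ powers of $x$ off $x^{\beta-\gamma}\partial^{\alpha-\delta}$, exactly as in the proof of Proposition \ref{stima1} but now of higher order and organized through the inverse Leibniz identity \eqref{pr3}, so as to write the composite as a bounded ${\rm OP}G^{0,0}(\rd)$ operator acting on monomials $x^{\beta''}\partial^{\alpha''}u$ with $|\alpha''|=|\alpha|-(|\gamma|+|\delta|)$ and $|\beta''|=|\beta|-(|\gamma|+|\delta|)$ (when $|\alpha|,|\beta|\ge|\gamma|+|\delta|$; the remaining cases are easier), plus a finite chain of lower-order correction terms. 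The gain is that $\max\{|\alpha|,|\beta|\}$ drops by $|\gamma|+|\delta|$, producing a factorial ratio $\frac{\max\{|\alpha''|,|\beta''|\}!}{\mab}\sim\max\{|\alpha|,|\beta|\}^{-(|\gamma|+|\delta|)}$.

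Finally I would assemble the weighted sum and perform the combinatorial bookkeeping, which I expect to be the main obstacle. For the leading monomials the three sources of growth balance exactly: by \eqref{pr4} together with $\binom{n}{k}\le n^k/k!$ one has $\binom\alpha\delta\binom\beta\gamma\le |\alpha|^{|\delta|}|\beta|^{|\gamma|}/(\gamma!\delta!)$, the factor $\gamma!\delta!$ cancels against the operator norm, and the remaining $|\alpha|^{|\delta|}|\beta|^{|\gamma|}\le\max\{|\alpha|,|\beta|\}^{|\gamma|+|\delta|}$ is absorbed by the factorial ratio above; here the normalization by $\mab$ (rather than $\alpha!\beta!$) and the multi-index inequalities \eqref{pr5},\eqref{pr8} are precisely what make the cancellation work. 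Matching the $\eps$-powers leaves a surplus $\eps^{2(|\gamma|+|\delta|)}$, so summing over $(\gamma,\delta)\neq(0,0)$ produces a geometric series $\sum_{(\gamma,\delta)\neq(0,0)}C_s^{|\gamma|+|\delta|+1}\eps^{2(|\gamma|+|\delta|)}$ multiplying $S^{s,\eps}_{N-1}[u]$, which is $\le C_s\eps\,S^{s,\eps}_{N-1}[u]$ once $\eps$ is small. The delicate part is to verify that the correction terms from the peeling obey the same balance — each carries an extra power of $\max\{|\alpha|,|\beta|\}$ but also an extra order reduction, hence an extra factorial gain and extra powers of $\eps$, so that the corresponding sub-series still converge — and that every estimate is uniform in $\alpha,\beta$.
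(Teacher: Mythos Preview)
Your overall strategy---expand the commutator, exploit the negative order of $E(D_\xi^\gamma\partial_x^\delta p)(x,D)$, peel derivatives and $x$-powers via inverse Leibniz to reduce to a bounded ${\rm OP}G^{0,0}$ operator, then balance the combinatorics---is indeed the paper's strategy. But your particular expansion produces a term that your peeling cannot control, and the claim ``the remaining cases are easier'' is where the argument breaks.

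Take the term $(\gamma,\delta)=(\beta,\alpha)$ in your double sum (say in dimension $d=1$ with $|\alpha|=|\beta|=k$). Then $x^{\beta-\gamma}\partial^{\alpha-\delta}u=u$, so there is nothing to peel; the operator $E(D_\xi^\beta\partial_x^\alpha p)(x,D)$ acts directly on $u$ and by \eqref{Gsymbols} its $H^s\to H^s$ norm is only bounded by $C_s^{|\alpha|+|\beta|+1}\alpha!\beta!$. The binomial coefficient is $1$, so the contribution is
\[
\frac{\eps^{|\alpha|+|\beta|}}{\max\{|\alpha|,|\beta|\}!}\,\alpha!\beta!\,C_s^{|\alpha|+|\beta|+1}\|u\|_s
=(\eps C_s)^{2k}\,k!\,C_s\|u\|_s,
\]
and summing over $k$ diverges. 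The point is that when $|\gamma|$ and $|\delta|$ are \emph{both} large, the product $\gamma!\delta!$ from the operator norm is quadratic in factorials, while $\max\{|\alpha|,|\beta|\}!$ in the denominator is only linear; your surplus $\eps$-powers cannot absorb the leftover $\min\{|\alpha|,|\beta|\}!$. The same obstruction persists in the intermediate ``remaining cases''.

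The paper circumvents this by \emph{not} expanding symmetrically. It writes $[P,x^\beta\partial^\alpha]=[P,x^\beta]\partial^\alpha+x^\beta[P,\partial^\alpha]$, and for the second piece first absorbs $x^{\tilde\delta}$ (a multi-index with $|\tilde\delta|\le|\delta|$ and $\tilde\delta\le\beta$) into the symbol \emph{before} expanding the remaining $x^{\beta-\tilde\delta}$ via \eqref{pr1}. The resulting symbol $x^{\tilde\delta}(D_\xi^{\gamma_0}\partial_x^\delta p)\,\xi^{\cdots}$ lies in $G^{m,n}$ with seminorms $\lesssim C^{|\gamma_0|+|\delta|+1}\gamma_0!\delta!$---only one ``large'' factorial, because the $x^{\tilde\delta}$ compensates the $x$-decay from $\partial_x^\delta$---and the new index satisfies $\gamma_0\le\beta-\tilde\delta$, so one never sees $|\gamma_0|$ and $|\delta|$ simultaneously maximal. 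That asymmetry is exactly the missing idea in your scheme.
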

\begin{proof}
We
have
\[
[P,x^\beta
\partial^\alpha]=[P,x^\beta]\partial^\alpha+x^\beta[P,\partial^\alpha].
\]
Hence, using \eqref{pr1}, \eqref{pr2}, we get
\begin{multline}\label{c0}
[P,x^\beta
\partial^\alpha]u=\sum_{0\not=\gamma_0\leq\beta}
(-1)^{|\gamma_0|+1}\binom{\beta}{\gamma_0}(D^{\gamma_0}_\xi
p)(x,D)(x^{\beta-\ga_0}\partial^\alpha
u)\\
-
\sum_{0\not=\delta\leq\alpha}
\binom{\alpha}{\delta}
x^\beta\partial^\delta_x
p(x,D)
\partial^{\alpha-\delta} u.
\end{multline}
Given $\beta$, $\delta$, let
$\tilde{\delta}$ be a
multi-index of maximal length
among those satisfying
$|\tilde{\delta}|\leq
|\delta|$ and
$\tilde{\delta}\leq\beta$
(hence $|\tilde{\delta}|=|\delta|$ unless
$\beta-\tilde{\delta}=0$).
Writing $x^\beta= x^{\tilde{\delta}}x^{\beta-\tilde{\delta}}$
in the last term of
\eqref{c0} and using again
\eqref{pr1} we get
\begin{equation}\label{c0bis}
[P,x^\beta
\partial^\alpha]u=\sum_{\delta\leq\alpha}\sum_{\stackrel{\gamma_0\leq\beta-\tilde{\delta}}{
(\delta,\ga_0)\not=(0,0)}}(-1)^{|\gamma_0|+1}\binom{\beta-\tilde{\delta}}
{\gamma_0}
\binom{\alpha}{\delta}
x^{\tid}(D^{\ga_0}_\xi
\partial^\delta_x p)(x,D)(
x^{\beta-\tid-\ga_0}\partial^{\alpha-\delta}
u).
\end{equation}
We now look at the operator $
x^{\beta-\tid-\ga_0}\partial^{\alpha-\delta}$.
Given $\ga_0$, $\alpha$,
$\delta$, let $\tg_0$ be a
multi-index of maximal length
among those satisfying
$|\tg_0|\leq|\ga_0|$ and
$\tg_0\leq\alpha-\delta$. We
write, by the inverse Leibniz
formula \eqref{pr3},
\begin{multline}
x^{\beta-\tilde{\delta}-\ga_0}\partial^{\alpha-\delta}
=x^{\beta-\tilde{\delta}-\ga_0}\partial^{\tg_0}\partial^{\alpha-\delta-\tg_0}
=\partial^{\tg_0}\circ
x^{\beta-\tilde{\delta}-\ga_0}\partial^{\alpha-\delta-\tg_0}\\ +\sum_{0\not=\ga_1
\leq\beta-\tilde{\delta}-\ga_0\atop
\ga_1\leq\tilde{\ga}_0}\frac{(-1)^{|\ga_1|}(\beta-\tid-\ga_0)!}{(\beta-\tid-\ga_0-\ga_1)!}
\binom{\tg_0}{\ga_1}\partial^{\tg_0-\ga_1}\circ
x^{\beta-\tilde{\delta}-\ga_0-\ga_1}\partial^{\alpha-\delta-\tg_0}.
\end{multline}
We now look at the operator $
x^{\beta-\tid-\ga_0-\ga_1}\partial^{\alpha-\delta-\tg_0}$.
We denote by $\tg_1$ a
multi-index of maximal length
among those satisfying
$|\tg_1|\leq|\ga_1|$,
$\tg_1\leq
\alpha-\delta-\tg_0$. Again
by the inverse Leibniz
formula we have
\begin{multline}
x^{\beta-\tilde{\delta}-\ga_0-\ga_1}\partial^{\alpha-\delta-\tg_0}
=x^{\beta-\tilde{\delta}-\ga_0-\ga_1}\partial^{\tg_1}
\partial^{\alpha-\delta-\tg_0-\tg_1}\\
=\partial^{\tg_1}\circ
x^{\beta-\tilde{\delta}-\ga_0-\ga_1}\partial^{\alpha-\delta-\tg_0-\tg_1}
\\
+\sum_{0\not=\ga_2
\leq\beta-\tilde{\delta}-\ga_0-\ga_1\atop
\ga_2\leq\tg_1}\frac{(-1)^{|\ga_2|}(\beta-\tid-\ga_0-\ga_1)!}
{(\beta-\tid-\ga_0-\ga_1-\ga_2)!}
\binom{\tg_1}{\ga_2}\partial^{\tg_1-\ga_2}\circ
x^{\beta-\tilde{\delta}-\ga_0-\ga_1-\ga_2}\partial^{\alpha-\delta-\tg_0-\tg_1}.
\end{multline}
Continuing in this way and
substituting all in
\eqref{c0bis} we get
\begin{multline*}
[P,x^\beta
\partial^\alpha]u=\sum_{\delta\leq\alpha}\sum_{j=0}^h
\sum_{\gamma_0\leq\beta-\tilde{\delta}\
\atop
(\delta,\ga_0)\not=(0,0)}
\sum_{0\not=\ga_1\leq\beta-\tid-\ga_0\atop
\ga_1\leq\tg_0}\cdots
\sum_{0\not=\ga_j\leq\beta-\tid-\ga_0-\ldots-\ga_{j-1}\atop
\ga_j\leq \tg_{j-1}}
C_{\alpha,\beta,\delta,\ga_0,\ga_1,\ldots,\ga_j}\\
\times
p_{\alpha,\beta,\delta,\ga_0,\ga_1,\ldots,\ga_j}(x,D)\big(
x^{\beta-\tid-\ga_0-\ldots-\ga_j}\partial^{\alpha-\delta-\tg_0-\ldots-\tg_j}
u\big),
\end{multline*}
where $\tg_j$ is defined inductively as
a multi-index of maximal length among
those satisfying $|\tg_j|\leq |\ga_j|$
and $\tg_j\leq
\alpha-\delta-\tg_0-\ldots-\tg_{j-1}$,
\begin{align}\label{c1bis}
|C_{\alpha,\beta,\delta,\ga_0,\ga_1,\ldots,\ga_j}|&=\frac{\alpha!(\beta-\tid)!}
{(\alpha-\delta)!\delta!\ga_0!(\beta-\tid-\ga_0-\ldots-\ga_j)!}\prod_{k=1}^j\binom{\tg_{k-1}}{\ga_k}
\nonumber\\
&\leq
\frac{|\alpha|!|\beta-\tid|!}
{|\alpha-\delta|!\delta!\ga_0!|\beta-\tid-\ga_0-\ldots-\ga_j|!}
2^{|\tg_0+\ldots+\tg_{j-1}|},
\end{align}
cf. \eqref{pr5} and \eqref{pr6}, and
\begin{equation}
p_{\alpha,\beta,\delta,\ga_0,\ga_1,\ldots,\ga_j}(x,\xi)=x^{\tid}
\big(D^{\ga_0}_\xi\partial^\delta_x
p\big)(x,\xi)\xi^{\tg_0-\ga_1+\tg_1-\ldots-\ga_j+\tg_j},\quad
j\geq 0, \label{ordinecomm}
\end{equation}
(if $j=0$ in \eqref{c1bis} we
mean that there are not the
binomial factors, nor the
power of $2$).
 Observe that,
since we have $\ga_j\not=0$
for every $j\geq1$, this
procedure in fact stops after
a finite number of steps.\par
Now we observe that, by \eqref{Gsymbols}, 
\eqref{pr6}, 
and Leibniz' formula, for
every
$\theta,\sigma\in\mathbb{N}^d$
we have
\begin{equation}\label{c2}
|\partial^\theta_\xi\partial^\sigma_x
p_{\alpha,\beta,\delta,\ga_0,\ga_1,\ldots,\ga_j}(x,\xi)|\leq
C^{|\ga_0|+|\delta|+1}\ga_0!\delta!\langle
x\rangle^{n-|\sigma|}\langle\xi\rangle
^{m-|\theta|},
\end{equation}
for some constant $C$
depending only on $\theta$
and $\sigma$. In fact
$|\tid|\leq|\delta|$,
$|\tg_0-\ga_1+\tg_1-\ldots-\ga_j+\tg_j|\leq|\tg_0|\leq|\ga_0|$,
and the powers of $|\delta|$
and $|\gamma_0|$ which arise
can be estimated by
$C^{|\ga_0|+|\delta|+1}$ for
some $C>0$.\par
 We now use these last bounds to estimate
 $E\circ p_{\alpha,\beta,\delta,\ga_0,\ga_1,\ldots,\ga_j}(x,D)$. To this end,
 observe that this operator
 belongs to ${\rm OP}G^{0,0}(\rd)$, and
 therefore its norm as a
 bounded operator on
 $H^s(\rd)$ is estimated by a
 seminorm of its symbol in
 $G^{0,0}(\rd)$, depending only on
 $s$ and $d$. Such a seminorm
 is in turn estimated by the
 product of a seminorm of the
 symbol of $E$ in
 $G^{-m,-n}(\rd)$ and a
 seminorm of
 $p_{\alpha,\beta,\delta,\ga_0,\ga_1,\ldots,\ga_j}$
 in $G^{m,n}(\rd)$, again
 depending only on $s,d$.
 Hence we see from \eqref{c2}
 that
 \begin{equation}\label{c3}
 \|E\circ p_{\alpha,\beta,\delta,\ga_0,\ga_1,\ldots,\ga_j}(x,D)
 \|_{\mathcal{B}(H^s(\rd))}\leq
 C_s^{|\ga_0|+|\delta|+1}\ga_0!\delta!.
\end{equation}
Let now
$|\beta|\geq|\alpha|$. Then
$|\tid|=|\delta|$. Using
moreover the estimate
\[
\frac{|\alpha|!|\beta-\tilde{\delta}|!}{|\beta|!|\alpha-\delta|!}=\frac{|\alpha|!(|\beta|-|\delta|)!}{|\beta|!(|\alpha|-|\delta|)!}\leq1,
\]
together with \eqref{c1bis}
and \eqref{c3}, we
obtain
\begin{multline}\label{c4}
\frac{\eps^{|\alpha|+|\beta|}}{|\beta|!}
|C_{\alpha,\beta,\delta,\ga_0,\ga_1,\ldots,\ga_j}|
\|E\circ p_{\alpha,\beta,\delta,\ga_0,\ga_1,\ldots,\ga_j}(x,D)(
x^{\beta-\tid-\ga_0-\ldots-\ga_j}\partial^{\alpha-\delta-\tg_0-\ldots-\tg_j}
u)\|_s\\
\leq
C_s(C_s\epsilon)^{|\delta|+|\ga_0+\ldots+\ga_j|
+|\tg_0+\ldots+\tg_{j-1}|}
\frac{\eps^{|\alpha|+|\beta|-|\delta|
-|\ga_0+\ldots+\ga_j|-|\tg_0+\ldots+\tg_{j-1}|}}
{|\beta-\tid-\ga_0-\ldots-\ga_j|!}\\
\times
\|x^{\beta-\tid-\ga_0-\ldots-\ga_j}\partial^{\alpha-\delta-\tg_0-\ldots-\tg_j}
u\|_s.
\end{multline}
Similarly, if
$|\alpha|\geq|\beta|$ we have
$|\tg_k|=|\ga_k|$, $0\leq
k\leq j$, and
\begin{equation}\label{c5}
\frac{|\beta-\tilde{\delta}|!|\alpha-\delta-\tg_0-\ldots-\tg_j|!}{|\alpha-
\delta|!|\beta-\tid-\ga_0-\ldots-\ga_j|!
}\leq1,
\end{equation}
(recall that if
$|\tid|<|\delta|$ then
$\beta-\tid=\ga_0=\ldots=\ga_j=\tg_0=\ldots=\tg_j=0$).\par
By \eqref{c1bis}, \eqref{c3}
\eqref{c5}, we get in this
case
\begin{multline}\label{c4bis}
\frac{\eps^{|\alpha|+|\beta|}}{|\alpha|!}
|C_{\alpha,\beta,\delta,\ga_0,\ga_1,\ldots,\ga_j}|
\|E\circ p_{\alpha,\beta,\delta,\ga_0,\ga_1,\ldots,\ga_j}(x,D)(
x^{\beta-\tid-\ga_0-\ldots-\ga_j}\partial^{\alpha-\delta-\tg_0-\ldots-\tg_j}
u)\|_s\\
\leq
C_s(C_s\epsilon)^{|\delta|+|\ga_0+\ldots+\ga_j|+|\tg_0+\ldots+\tg_{j-1}|}
\frac{\eps^{|\alpha|+|\beta|-|\delta|
-|\ga_0+\ldots+\ga_j|-|\tg_0+\ldots+\tg_{j-1}|}}
{|\alpha-\delta-\tg_0-\ldots-\tg_j|!}\\
\times
\|x^{\beta-\tid-\ga_0-\ldots-\ga_j}\partial^{\alpha-\delta-\tg_0-\ldots-\tg_j}
u\|_s.
\end{multline}
Since, if
$|\beta|\geq|\alpha|$, we
have
\[
\max\{|\beta-\tid-\ga_0-\ldots-\ga_j|,
|\alpha-\delta-\tg_0-\ldots-\tg_j|\}=|\beta-\tid-\ga_0-\ldots-\ga_j|,
\]
whereas if
$|\alpha|\geq|\beta|$ it
turns out
\[
\max\{|\beta-\tid-\ga_0-\ldots-\ga_j|,
|\alpha-\delta-\tg_0-\ldots-\tg_j|\}=|\alpha-\delta-\tg_0-\ldots-\tg_j|,
\]
we deduce from \eqref{c4} and
\eqref{c4bis} that, if
$\epsilon$ is small enough,
\begin{align*}
&\sum_{|\alpha|+|\beta|\leq N}\frac{\eab}{\mab}\|E[P,x^\beta\partial^\alpha]u\|_s\\
&\leq C_s
\sum_{|\tilde{\alpha}|+|\tilde{\beta}|\leq
N-1}
\frac{\eps^{|\tilde{\alpha}|+|\tilde{\beta}|}}{\max\{|\tilde{\alpha}|,
|\tilde{\beta}|\}!}\|x^{\tilde{\beta}}\partial^{\tilde{\alpha}} u\|_s\sum_{j=0}^h\sum_{\delta}\sum_{\ga_1\not=0,
\ldots,\ga_j\not=0\atop
\ga_0:\,(\delta,\ga_0)\not=(0,0)}
 (C_s\epsilon)^{|\delta|+|\ga_0+\ga_1+\ldots+\ga_j|}\\
 &\leq S^{s,\eps}_{N-1}[u]\sum_{j=0}^h (C'_s\eps)^{j+1}
 \leq C''_s\eps
 S^{s,\eps}_{N-1}[u].
\end{align*}
\end{proof}

We now turn the attention to
the nonlinear term.
\begin{proposition}\label{nonlin1}
Let $E\in{\rm
OP}G^{-m,-n}(\rd)$, $m\geq1$,
$n\geq 0$, and
$h\in\mathbb{N}^d$, $|h|\leq
\max\{n-1,0\}$,
$\rho_k\in\mathbb{N}^d$,
$|\rho_k|\leq m-1$, for
$1\leq k\leq l$,
$l\geq2$.\par Then for every
$s>d/2+\max_k\{|\rho_k|\}$
there exists a constant
$C_s>0$ such that, for every
$\eps$ small enough and $u\in
\cS(\rd)$, the following
estimates hold:
\begin{multline}\label{c4tris}
\sum_{0<|\alpha|+|\beta|\leq
N}\frac{\eab}{\mab}\|E\big(
x^\beta\partial^\alpha\big(x^h \prod_{k=1}^l \partial^{\rho_k}u\big)\big)\|_s\\
\leq
C_s\epsilon\big(\|u\|_s^{l-1}S_{N-1}^{s,\eps}[u]+
(S_{N-1}^{s,\eps}[u])^{l}\big)
\end{multline}
if $n \geq 1$ and
\begin{multline}\label{nn8}
\sum_{0<|\alpha|+|\beta|\leq
N}\frac{\eab}{\mab}\|E\big(
x^\beta\partial^\alpha\prod_{k=1}^l \partial^{\rho_k}u\big)\|_s\\
\leq
C_{s}\epsilon\big(\|\langle
x\rangle^{\frac{1}{l-1}}
u\|_s^{l-1}
S_{N-1}^{s,\eps}[u]+
(S_{N-1}^{s,\eps}[u])^{l}\big).
\end{multline}
if $0\leq n<1.$
\end{proposition}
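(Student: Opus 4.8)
The plan is to reduce \eqref{c4tris} and \eqref{nn8}, via Leibniz' formula, to a finite sum over the ways of distributing the multi-indices $\alpha$ and $\beta$ among the monomial $x^h$ and the $l$ factors $\partial^{\rho_k}u$, and then to recognise each summand as a bounded $G$-operator applied to a product that the Schauder (Banach--algebra) estimate separates. Concretely I would first expand $\partial^\alpha\big(x^h\prod_k\partial^{\rho_k}u\big)$ by Leibniz, writing $\alpha=\alpha_0+\alpha_1+\ldots+\alpha_l$ with $\alpha_0\leq h$ hitting the monomial and $\alpha_k$ sent to the $k$-th factor, and split the multiplication $x^\beta=x^{\beta_1}\cdots x^{\beta_l}$, $\beta=\beta_1+\ldots+\beta_l$. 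Each summand then has the shape $E\big(x^{h-\alpha_0}\prod_{k=1}^l x^{\beta_k}\partial^{\rho_k+\alpha_k}u\big)$, up to binomial and factorial prefactors produced by the two expansions, which are controlled by \eqref{pr6} and the multinomial bound \eqref{pr8}.

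The factor $\epsilon$ and the reduction of the total weight from $N$ to $N-1$ are produced together, exactly as in Propositions \ref{stima1} and \ref{commutatore}: since $|\alpha|+|\beta|\geq1$, I strip one unit off $\alpha$ or $\beta$ and absorb it into the parametrix. Set $M:=\max_k|\rho_k|\leq m-1$, so that $s-M>d/2$. If some $\alpha_k\neq0$, I factor a $\partial_j$ off that derivative and use $E\circ\partial_j\in{\rm OP}G^{1-m,0}(\rd)$, which has order $\leq0$ in $\xi$ because $m\geq1$ and hence is bounded $H^{s-M}(\rd)\to H^{s-M+m-1}(\rd)\subseteq H^s(\rd)$ (here $m-1\geq M$); the elementary inequality on the factorial weights then releases one $\epsilon$ and lowers the total weight by one. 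After this absorption I move the remaining $\partial^{\rho_k}$ past the weights $x^{\beta_k}$ through the $G$-calculus and apply the algebra estimate on $H^{s-M}$ to factor the product norm as $\prod_k\|x^{\beta_k}\partial^{\alpha_k}u\|_s$; the factors with $\alpha_k=\beta_k=0$ contribute $\|u\|_s$ and yield the term $\|u\|_s^{l-1}S_{N-1}^{s,\eps}[u]$, while the genuinely distributed summands assemble into $(S_{N-1}^{s,\eps}[u])^l$.

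The two cases $n\geq1$ and $0\leq n<1$ differ only in how the $x$-decay is supplied. When $n\geq1$ one has $|h|\leq n-1$, so $E\circ x^{h-\alpha_0}\in{\rm OP}G^{-m,-n+|h-\alpha_0|}(\rd)\subseteq{\rm OP}G^{-m,-1}(\rd)$ already carries a full power of decay; this is what lets the spectator factors appear unweighted, giving $\|u\|_s^{l-1}$ in \eqref{c4tris}. When $0\leq n<1$ we have $h=0$ and $E\in{\rm OP}G^{-m,0}(\rd)$ supplies no decay, so if the stripped unit is a power $x_j$ rather than a derivative I must instead use $E\circ x_j\in{\rm OP}G^{-m,1}(\rd)$, which maps $H^{s,1}(\rd)\to H^{s+m}(\rd)\subseteq H^s(\rd)$; the extra $\langle x\rangle$ needed to place the product in $H^{s,1}(\rd)$ is split evenly, $\langle x\rangle=\prod_k\px^{1/(l-1)}$, among the $l-1$ spectator factors, and this is precisely the origin of the weighted norm $\|\px^{1/(l-1)}u\|_s^{l-1}$ in \eqref{nn8}. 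In both cases one finishes by summing the geometric series in $\epsilon$ coming from the accumulated prefactors, as in Proposition \ref{commutatore}.

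I expect the main obstacle to be the simultaneous bookkeeping: extracting exactly one clean power of $\epsilon$ uniformly over all distributions of $(\alpha,\beta)$ while matching the single denominator $\mab$ against the product $\prod_k\max\{|\alpha_k|,|\beta_k|\}!$ of per-factor weights — this is where \eqref{pr8} together with \eqref{pr4}--\eqref{pr6} are indispensable — and keeping each composed operator $E\circ\partial_j$ or $E\circ x_j$ of the right order so that it maps into $H^s(\rd)$ without spoiling the factorial control through the non-commutativity of $x^{\beta_k}$ and $\partial^{\rho_k}$. The fractional redistribution of $\langle x\rangle$ in the subcritical range $0\leq n<1$ is the other delicate point, since it is what forces the weighted norm to appear and relies on $\px^{1/(l-1)}u\in H^s(\rd)$, which holds under the standing hypotheses.
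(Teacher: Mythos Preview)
Your plan is the paper's own argument: Leibniz expansion, peeling off one $\partial_j$ or $x_j$ to manufacture the factor $\eps$ and drop the weight from $N$ to $N-1$, Schauder factorisation in $H^{s-M}$, and the case split $n\geq1$ versus $0\leq n<1$ for where the $x$-decay comes from. The two ``obstacles'' you name at the end are precisely the places where the paper supplies explicit constructions, and your sketch does not close without them.

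First, the splitting $x^\beta=\prod_k x^{\gamma_k}$ cannot be arbitrary; it must be \emph{adapted} to the Leibniz splitting $\alpha-e_j=\delta_0+\delta_1+\ldots+\delta_l$. The paper distinguishes $|\beta|\leq|\alpha|-1$ from $|\beta|\geq|\alpha|$ and in each case chooses $\gamma_k$ with $|\gamma_k|\leq|\delta_k|$ (respectively $|\gamma_k|\geq|\delta_k|$), so that $\max\{|\gamma_k|,|\delta_k|\}$ is always the larger of the two and \eqref{pr8} turns the single denominator $\mab$ into the product $\prod_k\max\{|\gamma_k|,|\delta_k|\}!$ needed to reassemble $(S^{s,\eps}_{N-1}[u])^l$. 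Without this coupled choice the factorials do not match.

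Second, ``moving $\partial^{\rho_k}$ past the weights $x^{\gamma_k}$ through the $G$-calculus'' is not free, because $|\gamma_k|$ is unbounded and the commutators grow. The paper writes
\[
x^{\gamma_k}\partial^{\delta_k+\rho_k}u=\partial^{\rho_k}\big(x^{\gamma_k}\partial^{\delta_k}u\big)+[x^{\gamma_k}\partial^{\delta_k},\partial^{\rho_k}]u,
\]
estimates the first term by $\|x^{\gamma_k}\partial^{\delta_k}u\|_s$, and controls the second by a \emph{second} application of Proposition~\ref{commutatore} (with $P=\partial^{\rho_k}$ and $E=\langle D\rangle^{-|\rho_k|}$), producing an extra $C_s\eps S^{s,\eps}_{N-2}[u]$ which is then absorbed. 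This recursive use of the commutator proposition is the step your phrase ``through the $G$-calculus'' is hiding.
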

\begin{proof}
Let $n \geq 1,$ (hence
$|h|\leq n-1$). In the sum
\eqref{c4tris} we consider
the terms with $\alpha=0$.
Namely, we prove that
\begin{equation}\label{nn10}
 \sum_{0\not=|\beta|\leq N}
 \frac{\eps^{|\beta|}}{|\beta|!}\|E\big(
x^\beta x^h \prod_{k=1}^l
\partial^{\rho_k}u\big)\|_s\leq
C_s
\epsilon\|u\|_s^{l-1}S_{N-1}^{s,\eps}[u].
\end{equation}
Given $\beta\not=0$, let
$j\in\{1,\ldots,d\}$ such that
$\beta_j\not=0$. Since $E\circ x_j
x^h\in{\rm OP}G^{-m,-n+|h|+1}(\rd)$ is
bounded $H^{s-M}(\rd)\to H^s(\rd)$,
with $M=\max_k\{|\rho_k|\}$ (because
$|h|\leq n-1$, $|\rho_k|\leq m-1$), and
applying Schauder's estimates (recall
that $s-M>d/2$) we have
\[
\frac{\eps^{|\beta|}}{|\beta|!}\|E\big(
x^\beta x^h \prod_{k=1}^l
\partial^{\rho_k}u\big)\|_s\leq
C_s\eps\frac{\eps^{|\beta|-1}}{(|\beta|-1)!}\|x^{\beta-e_j}\partial^{\rho_1}
u\|_{s-M}\|u\|^{l-1}_s.
\]
Then \eqref{nn10} follows by
writing
\begin{equation}\label{nn11}
x^{\beta-e_j}\partial^{\rho_1}
u=\sum_{\gamma\leq\beta-e_j\atop\gamma\leq\rho_1}\frac{(-1)^{|\gamma|}(\beta-e_j)!}
{(\beta-e_j-\gamma)!}
\binom{\rho_1}{\gamma}\partial^{\rho_1-\gamma}(x^{\beta-e_j-\gamma}u).
\end{equation}
in view of\eqref{pr3}, and
using
\begin{equation}\label{nn12}
\frac{(\beta-e_j)!}{(|\beta|-1)!(\beta-e_j-\gamma)!}\leq\frac{1}
{(|\beta|-1-|\gamma|)!},
\end{equation}
cf. \eqref{pr5}.\par For $0\leq n <1$,
consider first the terms in \eqref{nn8}
with $\alpha=0$. We prove that
\[
 \sum_{0\not=|\beta|\leq N}
 \frac{\eps^{|\beta|}}{|\beta|!}\|E\big(
x^\beta \prod_{k=1}^l
\partial^{\rho_k}u\big)\|_s\leq
C_{s} \epsilon \|\langle
x\rangle^{\frac{1}{l-1}}
u\|_s^{l-1}
S_{N-1}^{s,\eps}[u].
\]
To this end, given
$\beta\not=0$, let
$j\in\{1,\ldots,d\}$ such
that $\beta_j\not=0$. Since
$E$ is bounded
$H^{s-M}(\rd)\to H^s(\rd)$, $M=\max_k\{|\rho_k|\}$,
by Schauder's estimates we
have
\begin{align*}
\frac{\eps^{|\beta|}}{|\beta|!}\|E\big(
x^\beta \prod_{k=1}^l
\partial^{\rho_k}u\big)\|_s&\leq
C'_s\eps
\frac{\eps^{|\beta|-1}}{(|\beta|-1)!}\|x^{\beta-
e_j}
\partial^{\rho_1}
u\|_{s-M}\|x_j
\prod_{k=2}^l\partial^{\rho_k} u\|_{s-M}\\
&\leq C''_{s}\eps
\frac{\eps^{|\beta|-1}}{(|\beta|-1)!}\|x^{\beta-
e_j}
\partial^{\rho_1} u\|_{s-M}\|\langle
x\rangle^{\frac{1}{l-1}}
u\|_s^{l-1},
\end{align*}
cf. the action on weighted Sobolev
spaces described in Section
\ref{prelimi}. Then the claim follows
by applying again \eqref{nn11} and
\eqref{nn12}.\par We now treat the
terms with $\alpha\not=0$ in the sums
\eqref{c4tris} and \eqref{nn8}. Namely,
we prove that (both in the cases $0\leq
n<1$ and $n\geq1$)
\begin{equation}\label{nn13}
\sum_{0<|\alpha|+|\beta|\leq
N\atop
\alpha\not=0}\frac{\eab}{\mab}\|E\big(
x^\beta\partial^\alpha\big(x^h
\prod_{k=1}^l
\partial^{\rho_k} u\big)\big)\|_s\leq
C_s\epsilon
(S_{N-1}^{s,\eps}[u])^{l}.
\end{equation}
Let
$\alpha\not=0$ and
$j\in\{1,\ldots,d\}$ such
that $\alpha_{j}\not=0$. We
can write
\[
x^\beta\partial^\alpha\big(x^h
\prod_{k=1}^l
\partial^{\rho_k} u\big)=Q_1^{\alpha,\beta}[u]+Q_2^{\alpha,\beta}[u],
\]
with
\[
Q_1^{\alpha,\beta}[u]=\partial_{x_{j}}
x^\beta\partial^{\alpha-e_{j}}\big(x^h
\prod_{k=1}^l
\partial^{\rho_k} u\big),
\]
\[
Q_2^{\alpha,\beta}[u]=-\beta_{j}
x^{\beta-e_{j}}
\partial^{\alpha-e_{j}}\big(x^h
\prod_{k=1}^l
\partial^{\rho_k} u\big).
\]
Now we estimate $\frac{\eab}{\mab}\|E
Q_1^{\alpha,\beta}[u]\|_s$. To this end
observe that, by Leibniz' formula,
\[
Q_1^{\alpha,\beta}[u]=\partial_{x_{j}}
\sum_{\delta_0+\delta_1+\ldots+\delta_{l}=\alpha-e_{j}\atop
\delta_0\leq h}
\frac{(\alpha-e_{j})!}{\delta_0!\delta_1!
\ldots\delta_{l}!}\frac{h!}
{(h-\delta_0)!}
x^{h-\delta_0}
x^\beta\prod_{k=1}^l
\partial^{\delta_k+\rho_k} u.
\]
 Let now $\tilde{\delta}_0$ be a
 multi-index of maximal length among
 those satisfying
 $|\tilde{\delta}_0|\leq|\delta_0|$
 and $\tilde{\delta}_0\leq\beta$.
 Observe that
 $E\partial_{x_{j_\alpha}}\circ
 x^{h-\delta_0} x^{\tilde{\delta}_0}\in{\rm OP}G^{-m+1,-n+|h|}(\rd)$ is bounded $
 H^{s-M}(\rd)\to H^{s}(\rd)$ with $M=\max_k\{|\rho_k|\}$
 (because
 $|\rho_k|\leq m-1$ for $1\leq k\leq l$ and $|h|\leq\max\{n-1,0\}\leq n$). Hence
\begin{multline*}
 \frac{\eab}{\mab}\|E
 Q_1^{\alpha,\beta}[u]\|_s\\
 \leq C_s \sum_{\delta_0+\delta_1+\ldots+\delta_{l}=\alpha-e_{j}\atop
\delta_0\leq h}
\frac{\eab}{\mab}\frac{(\alpha-e_{j})!}{\delta_0!\delta_1!
\ldots\delta_{l}!}\frac{h!}
{(h-\delta_0)!}\\
\times
 \|x^{\beta-\tilde{\delta}_0}\prod_{k=1}^l
\partial^{\delta_k+\rho_k} u\|_{s-M}.
\end{multline*}
We can now write
\begin{equation}\label{nn6}
x^{\beta-\tilde{\delta}_0}\prod_{k=1}^l
\partial^{\delta_k+\rho_k} u
=\prod_{k=1}^l x^{\ga_k}
\partial^{\delta_k+\rho_k} u,
\end{equation}
where
$\ga_1+\ldots+\ga_{l}=\beta-\tilde{\delta}_0$
and, if $|\beta|\leq
|\alpha|-1$, with
 $|\ga_k|\leq|\delta_k|$ for $1\leq
k\leq l$ (which is possible because in
that case
$|\beta-\tilde{\delta}_0|\leq|\alpha-\delta_0|-1$;
observe that if
$|\tilde{\delta}_0|<|\delta_0|$ then
$\beta-\tilde{\delta}_0=0$), whereas,
if $|\beta|\geq |\alpha|$, with
$|\ga_k|\geq|\delta_k|$ for $1\leq
k\leq l$ (which is possible because in
that case
$|\tilde{\delta}_0|=|\delta_0|$ and
$|\beta-\tilde{\delta}_0|\geq|\alpha-\delta_0|\geq|\alpha-\delta_0|-1$).\par
Hence we get by Schauder's estimates
\begin{multline} \label{nn1}
\frac{\eab}{\mab}\|E
Q_1^{\alpha,\beta}[u]\|_s\\
\leq C_s\eps
\sum_{\delta_0+\delta_1+\ldots+\delta_{l}
= \alpha-e_{j}\atop
\delta_0\leq h}
\prod_{k=1}^{l}
\frac{\eps^{|\gamma_k|+|\delta_k|}}
{\max\{|\gamma_k|,|\delta_k|\}!}\|x^{\ga_k}
\partial^{\delta_k+\rho_k}
u\|_{s-M},
\end{multline}
where if
$|\beta|\leq|\alpha|-1$ we
used the inequality
\begin{equation}\label{nn3}
\frac{1}{(|\alpha|-1)!}
\frac{(\alpha-e_{j})!}
{\delta_0!\delta_1!\ldots\delta_{l}!}
\leq\frac{1}{|\delta_0|!|\delta_1|!\ldots|\delta_{l}|!},
\end{equation}
which is \eqref{pr8}, whereas if $|\alpha|\leq|\beta|$ we applied
\begin{equation}\label{nn4}
\frac{1}{|\beta|!}
\frac{(\alpha-e_{j})!}
{\delta_0!\ldots\delta_{l}!}
\leq\frac{1}{|\tilde{\delta}_0|!|\ga_1|!
\ldots|\ga_{l}|!},
\end{equation}
which also follows at once from
\eqref{pr8}.\par Now, we write
$x^{\gamma_k}\partial^{\delta_k+\rho_k}u=\partial^{\rho_k}
\big(x^{\gamma_k}\partial^{\delta_k}
u\big)+[x^{\gamma_k}\partial^{\delta_k},\partial^{\rho_k}]u$
in the last term of \eqref{nn1}, so
that
\[
\|x^{\gamma_k}\partial^{\delta_k+\rho_k}u\|_{s-M}
\leq
\|x^{\gamma_k}\partial^{\delta_k}
u\|_s+\|\langle
D\rangle^{-|\rho_k|}[x^{\gamma_k}
\partial^{\delta_k},\partial^{\rho_k}]u\|_s.
\]
Using this last estimate we
get
\begin{multline*}
\frac{\eab}{\mab}\|E
Q_1^{\alpha,\beta}[u]\|_s\\
\leq C_s\eps
\sum_{\delta_0+\delta_1+\ldots+\delta_{l}=\alpha-e_{j}\atop
\delta_0\leq
h}\prod_{k=1}^{l}\frac{\eps^{|\gamma_k|+|\delta_k|}}
{\max\{|\gamma_k|,|\delta_k|\}!}\Big\{
\|x^{\ga_k}\partial^{\delta_k}
u\|_{s}+\sum_{|\gamma|\leq
m-1}\|\langle
D\rangle^{-|\gamma|}[x^{\gamma_k}
\partial^{\delta_k},\partial^\gamma]u\|_s\Big\},
\end{multline*}
(recall that the $\gamma_k$'s depend on
$\beta,\delta_1,\ldots,\delta_{l}$). We
now sum the above expression over
$|\alpha|+|\beta|\leq N$,
$\alpha\not=0$.  When $\alpha$ and
$\beta$ vary, every term in the above
product also appears in the development
of
\[
\Big\{\sum_{|\tilde{\alpha}|+|\tilde{\beta}|\leq
N-1}\frac{\eps^{|\tilde{\alpha}|+|\tilde{\beta}|}}
{\max\{|\tilde{\alpha}|,|\tilde{\beta}|\}!}\Big\{
\|x^{\tilde{\beta}}\partial^{\tilde{\alpha}}
u\|_{s}+\sum_{|\gamma|\leq
m-1}\|\langle
D\rangle^{-|\gamma|}[x^{\tilde{\beta}}
\partial^{\tilde{\alpha}},\partial^\gamma]u\|_s \Big\}\Big\}^{l},
\]
and is repeated at most, say, $L$ times, with $L$ depending only on $h$ and the dimension $d$. Hence we
obtain
\begin{align*}
&\sum_{|\alpha|+|\beta|\leq N\atop
\alpha\not=0}\frac{\eab}{\mab}\|E
Q_1^{\alpha,\beta}[u]\|_s\\
&\leq
C''_s\eps\Big\{\sum_{|\tilde{\alpha}|+|\tilde{\beta}|\leq
N-1}
\frac{\eps^{|\tilde{\alpha}|+|\tilde{\beta}|}}
{\max\{|\tilde{\alpha}|,|\tilde{\beta}|\}!}\Big\{
\|x^{\tilde{\beta}}\partial^{\tilde{\alpha}}
u\|_{s}+\sum_{|\gamma|\leq
m-1}\|\langle
D\rangle^{-|\gamma|}[x^{\tilde{\beta}}
\partial^{\tilde{\alpha}},\partial^\gamma]u\|_s \Big\}\Big\}^{l}\\
&\leq C''_s\eps\big\{
S^{s,\eps}_{N-1}[u]+C'''_s\eps
S^{s,\eps}_{N-2}[u]\big\}^{l}\leq
C''''_s\eps
(S^{s,\eps}_{N-1}[u])^{l},
\end{align*}
where we used Proposition \ref{commutatore} applied with
 $\partial^\gamma$ and $\langle D\rangle^{-|\gamma|}$ in place
 of $P$ and $E$ respectively, and we understand $S^{s,\eps}_{-1}[u]=0$.\par
We finally show that
\begin{equation}\label{nn7}
\sum_{|\alpha|+|\beta|\leq
N\atop
\alpha\not=0}\frac{\eab}{\mab}\|E
Q_2^{\alpha,\beta}[u]\|_s\leq
C_s\eps
(S^{s,\eps}_{N-1}[u])^{l}.
\end{equation}
Since the arguments are
similar to the previous ones,
we give only a sketch of the
proof.\par We can write
\[
Q_2^{\alpha,\beta}[u]=\beta_{j}
\sum_{\delta_0+\delta_1+\ldots+\delta_{l}=\alpha-e_{j}\atop
\delta_0\leq h}
\frac{(\alpha-e_{j})!}{\delta_0!\delta_1!
\ldots\delta_{l}!}\frac{h!}
{(h-\delta_0)!}
x^{h-\delta_0}
x^{\beta-e_j}\prod_{k=1}^l
\partial^{\delta_k+\rho_k} u.
\]
If $\beta_j\not=0$, we
choose a multi-index
$\tilde{\delta}_0$ of maximal
length among
 those satisfying
 $|\tilde{\delta}_0|\leq|\delta_0|$
 and $\tilde{\delta}_0\leq\beta-e_j$. Next, we use the fact that
 $E\circ
 x^{h-\delta_0} x^{\tilde{\delta}_0}\in{\rm OP}G^{-m,-n+|h|}(\rd)$ is
  bounded $
 H^{s-M}(\rd)\to H^{s}(\rd)$ with $M=\max_k\{|\rho_k|\}$ and we apply the
   decomposition
\[
x^{\beta-e_j-\tilde{\delta}_0}\prod_{k=1}^l
\partial^{\delta_k+\rho_k} u
=\prod_{k=1}^l x^{\ga_k}
\partial^{\delta_k+\rho_k}
u,
\]
with
$\ga_1+\ldots+\ga_l=\beta-e_j-\tilde{\delta}_0$,
and
   $|\gamma_k|\leq|\delta_k|$
   for
   $1\leq k\leq l$, if
   $|\beta|\leq|\alpha|$, or
   $|\gamma_k|\geq|\delta_k|$
   for
   $1\leq k\leq l$, if
   $|\alpha|\leq|\beta|$.
Moreover, if
$|\beta|\leq|\alpha|$ one
uses
\[
\frac{\beta_j}{|\alpha|}\frac{1}{(|\alpha|-1)!}
\frac{(\alpha-e_{j})!}
{\delta_0!\delta_1!\ldots\delta_{l}!}
\leq\frac{1}{|\delta_0|!|\delta_1|!\ldots|\delta_{l}|!},
\]
in place of \eqref{nn3},
whereas if
$|\alpha|\geq|\beta|$ (hence
$|\tilde{\delta}_0|=|\delta_0|$)
one uses
\[
\frac{\beta_j}{|\beta|}\frac{1}{(|\beta|-1)!}
\frac{(\alpha-e_{j})!}
{\delta_0!\ldots\delta_{l}!}
\leq\frac{1}{|\tilde{\delta}_0|!|\ga_1|!
\ldots|\ga_{l}|!},
\]
in place of \eqref{nn4}.
Therefore we get the same
formula \eqref{nn1} with
$Q_2^{\alpha,\beta}$ in place
of $Q_1^{\alpha,\beta}$. The
proof then proceeds as that
for $Q_1^{\alpha,\beta}$
without other modifications.
\end{proof}

We are now ready to conclude the proof of Theorem
\ref{AA5.1}. \\

\noindent
\textit{End of the proof of
Theorem \ref{AA5.1} (the case $m\geq 1$).} It
follows from \eqref{A5.1f}
that, for
$\alpha,\beta\in\N^d$,
$\epsilon>0$,
\[
\frac{\eab}{\mab}
x^\beta\partial^\alpha
Pu=\frac{\eab}{\mab}
x^\beta\partial^\alpha
 f +
 \frac{\eab}{\mab}
x^\beta\partial^\alpha F[u],
\]
so that
\begin{multline*}
\frac{\eab}{\mab}P(
x^\beta\partial^\alpha
u)=\frac{\eab}{\mab}[P,x^\beta\partial^\alpha]u+\frac{\eab}{\mab}
x^\beta\partial^\alpha
 f\\ +
 \frac{\eab}{\mab}
x^\beta\partial^\alpha F[u].
\end{multline*}
We now apply to both sides
the parametrix $E$ of $P$.
With $R=EP-I\in{\rm
OP}G^{-1,-1}(\rd)$ we get
\begin{multline*}
\frac{\eab}{\mab}x^\beta\partial^\alpha
u=-\frac{\eab}{\mab}
R(x^\beta\partial^\alpha
u)+\frac{\eab}{\mab}
E[P,x^\beta\partial^\alpha]u\\
+\frac{\eab}{\mab}E(x^\beta\partial^\alpha
f)+
\frac{\eab}{\mab}E(x^\beta\partial^\alpha F[u] ).
\end{multline*}
Taking the $H^s$ norms and
summing over
$|\alpha|+|\beta|\leq N$ give
\begin{eqnarray}\label{colonna}
S^{s,\epsilon}_N[u] &\leq&
\|u\|_s+\sum_{0<|\alpha|+|\beta|\leq
N}\frac{\eab}{\mab}\|R(x^\beta\partial^\alpha
u)\|_s  \\ \nonumber
&&+\sum_{0<|\alpha|+|\beta|\leq
N}\frac{\eab}{\mab}\|E[P,x^\beta\partial^\alpha]u\|_s
\\ &&+\sum_{0<|\alpha|+|\beta|\leq
N}\frac{\eab}{\mab}\|E(x^\beta
\partial^\alpha f)\|_s \nonumber \\
&&+\sum_{0<|\alpha|+|\beta|\leq
N}\frac{\eab}{\mab}\|E(x^\beta\partial^\alpha F[u] )\|_{s}. \nonumber
\end{eqnarray}
The second and the third term in the right-hand side of \eqref{colonna} can be estimated using Propositions \ref{stima1} and \ref{commutatore} while the term containing $f$ is obviously dominated by $S_{\infty}^{s,\ve}[f].$ For the last term we can apply Proposition \ref{nonlin1}. Hence, for $n \geq 1,$ we have that, for
$\eps$ small enough,
\begin{multline*}
S^{s,\epsilon}_N[u]\leq
\|u\|_s+ C_s
S^{s,\epsilon}_\infty[f]+C_s\epsilon\Big(
S^{s,\epsilon}_{N-1}[u]+
\sum_{l}\big((S^{s,\eps}_{N-1}[u])^{l}
+ \|u\|_s^{l-1}
S^{s,\eps}_{N-1}[u]\big)\Big),
\end{multline*}
whereas if $0\leq n<1$ we get
\begin{multline*}
S^{s,\epsilon}_N[u]\leq
\|u\|_s+C_s
S^{s,\epsilon}_\infty[f]+C_{s}\epsilon\Big(
S^{s,\epsilon}_{N-1}[u]+\sum_{l}\big(
(S^{s,\eps}_{N-1}[u])^{l}\\
+\|\langle
x\rangle^{\frac{1}{l-1}}u\|_s^{l-1}S^{s,\eps}_{N-1}[u]\big)\Big).
\end{multline*}
In both cases we obtain
$S^{s,\eps}_\infty[u]<\infty$
if $\epsilon$ is small
enough, which implies
$u\in\sg$ by Proposition
\ref{stima0} (or, more
simply, by the standard Sobolev
embeddings, since
$s>d/2$).\par \qed


\subsection{Proof of Theorem \ref{AA5.1}: the case $0 < m < 1$}
In this case the nonlinearity has the
form
\begin{equation}
\label{A5.2tris} F[u]=\sum_{h,l}
F_{h,l} x^{h}u^l,
\end{equation}
where $l\in\N$, $l\geq2$, $h \in
\N^{d},$ with $|h|\leq\max\{n-1,0\}$
and $F_{h,l}\in\mathbb{C}$, the above
sum being finite.\par We follow the
same argument used for the case $m\geq
1$. In particular we can estimate the
first four terms in the right-hand side
of \eqref{colonna} as before since
Propositions \ref{stima1} and
\ref{commutatore} hold in general for
$m >0.$ Hence, to conclude, it is
sufficient to prove an estimate for the
nonlinear term. We have the following
result.

\begin{proposition}
\label{Gammanonlinearestimates}
Let $P$ satisfy the
assumptions of Theorem
\ref{AA5.1} for $0<m<1$ and let
 $E$ be a parametrix of $P$. Let $l\in\mathbb{N}$,
  $l\geq2$, $h \in \N^{d}$, $|h|\leq \max\{n-1,0\}$. Then,
  there exists
a constant $C'_s>0$ and, for every
$\tau>0$, there exists
 $C_{\tau}>0$ such
 that, for every $\eps$ small
 enough and $u\in \cS(\rd)$
 we have
\begin{align} \label{nonlinestimate2}
\sum_{0<|\alpha+\beta|\leq N}
\frac{\ve^{|\alpha|+|\beta|}}{\max \{
|\alpha|, |\beta|\}!}& \| E
(x^{\beta}\partial^{\alpha}(x^{h}u^\ell))
\|_{s} \leq  \tau C'_{s} \|
u\|_{s}^{\ell-1} S_{N}^{s,\ve}[u]
\nonumber
\\&+ C'_{s} (\ve
C_{\tau}+\tau+\ve)
(S_{N-1}^{s,\ve}[u])^{\ell}+C'_{s}\ve
\|\langle x
\rangle^{\frac{1}{\ell-1}}
u\|^{\ell-1}_{s}
S_{N-1}^{s,\ve}[u].
\end{align}
\end{proposition}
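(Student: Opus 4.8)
The plan is to run the scheme of Proposition~\ref{nonlin1}, the only genuinely new point being that now $E\in{\rm OP}G^{-m,-n}(\rd)$ smooths by the \emph{fractional} order $m\in(0,1)$, so that a whole derivative can no longer be absorbed into $E$. First I would expand $\partial^\alpha(x^hu^l)$ by Leibniz' formula as a finite sum of terms $c\,x^{h-\delta_0}\prod_{k=1}^l\partial^{\delta_k}u$ with $\delta_0+\delta_1+\ldots+\delta_l=\alpha$ and $\delta_0\le h$, multiply by $x^\beta$, and distribute $x^\beta=\prod_k x^{\ga_k}$ among the $l$ factors by the inverse Leibniz formula~\eqref{pr3}, exactly as in~\eqref{nn6}. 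Since $|h|\le\max\{n-1,0\}\le n$, the composition $E\circ(x^{h-\delta_0}\,\cdot\,)$ belongs to ${\rm OP}G^{-m,-n+|h-\delta_0|}(\rd)\subseteq{\rm OP}G^{-m,0}(\rd)$ and is therefore bounded $H^{s-m}(\rd)\to H^s(\rd)$ by~\eqref{contin}. Consequently every summand is controlled by
\[
\Big\|\prod_{k=1}^l x^{\ga_k}\partial^{\delta_k}u\Big\|_{s-m}.
\]

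Because $s>d/2$ (recall $\max\{|\rho_k|\}=0$ when $m<1$), $H^s(\rd)$ is a multiplicative algebra, and multiplication by an $H^s$ function is bounded on $H^\sigma(\rd)$ for every $|\sigma|\le s$; as $|s-m|\le s$ this applies with $\sigma=s-m$. Selecting one distinguished factor $x^{\ga_{k_0}}\partial^{\delta_{k_0}}u$ and treating the product of the remaining ones as the multiplier, I thus obtain
\[
\Big\|\prod_{k=1}^l x^{\ga_k}\partial^{\delta_k}u\Big\|_{s-m}\le C_s\,\|x^{\ga_{k_0}}\partial^{\delta_{k_0}}u\|_{s-m}\prod_{k\ne k_0}\|x^{\ga_k}\partial^{\delta_k}u\|_s ,
\]
where $k_0$ is chosen to carry a derivative whenever $\alpha\ne\delta_0$. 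This is the step where $0<m<1$ bites: the smoothing exponent $m$ is not an integer, so $\|x^{\ga_{k_0}}\partial^{\delta_{k_0}}u\|_{s-m}$ cannot be rewritten directly in terms of the norms entering $S_N^{s,\eps}[u]$.

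I would resolve this by the interpolation inequality
\[
\|v\|_{s-m}\le\tau\,\|v\|_s+C_\tau\,\|v\|_{s-1},\qquad\tau>0,
\]
valid since $s-1<s-m<s$, applied to $v=x^{\ga_{k_0}}\partial^{\delta_{k_0}}u$. In the $C_\tau$ term one gains a full derivative: writing $\partial^{\delta_{k_0}}=\partial_{x_j}\partial^{\delta_{k_0}-e_j}$ and commuting $\partial_{x_j}$ through $x^{\ga_{k_0}}$ one gets $\|x^{\ga_{k_0}}\partial^{\delta_{k_0}}u\|_{s-1}\le\|x^{\ga_{k_0}}\partial^{\delta_{k_0}-e_j}u\|_s+|(\ga_{k_0})_j|\,\|x^{\ga_{k_0}-e_j}\partial^{\delta_{k_0}-e_j}u\|_s$, which lowers the total order by one and frees a factor $\eps$; summing over $|\alpha|+|\beta|\le N$ as in Proposition~\ref{nonlin1} this produces the contribution $C'_s\,\eps C_\tau\,(S_{N-1}^{s,\eps}[u])^l$. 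In the $\tau$ term the order is \emph{not} lowered, so I would isolate the single ``concentrated'' term in which one factor carries the entire multi-index, $\ga_{k_0}=\beta$, $\delta_{k_0}=\alpha$ and all remaining factors equal $u$: this yields precisely $\tau C'_s\,\|u\|_s^{l-1}S_N^{s,\eps}[u]$, the unique contribution involving the \emph{full} sum $S_N^{s,\eps}[u]$. Every other term (the non-concentrated $\tau$-pieces, together with the genuinely distributed terms that gain an $\eps$ just as in the case $m\ge1$) has the distinguished factor of total degree $\le N-1$ and hence falls into $(S_{N-1}^{s,\eps}[u])^l$, contributing $C'_s(\tau+\eps)(S_{N-1}^{s,\eps}[u])^l$.

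It remains to treat the terms with $\alpha=0$ (and, likewise, those in which all derivatives fall on $x^h$, so that no $\partial^{\delta_k}u$ carries a derivative); these I would handle exactly as in the step for $0\le n<1$ of Proposition~\ref{nonlin1}, distributing $x^\beta$, absorbing one power $\px^{\frac{1}{l-1}}$ per factor through the weighted boundedness of $E$, and invoking \eqref{nn11}--\eqref{nn12}; this gives the last term $C'_s\,\eps\,\|\px^{\frac{1}{l-1}}u\|_s^{l-1}S_{N-1}^{s,\eps}[u]$. Collecting the three groups yields~\eqref{nonlinestimate2}. I expect the delicate point to be precisely the bookkeeping after the interpolation: one must check that the top-order sum $S_N^{s,\eps}[u]$ is reproduced \emph{only} through the single concentrated term—so that it appears with the small, freely adjustable coefficient $\tau$ needed for its absorption into the left-hand side in the final proof of Theorem~\ref{AA5.1}—while every remaining term either gains a derivative reduction or an extra factor $\eps$ and is thereby confined to $S_{N-1}^{s,\eps}[u]$.
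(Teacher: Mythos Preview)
Your approach is correct and rests on the same key mechanism as the paper's: a $\tau/C_\tau$ splitting that compensates for the fact that $E$ only smooths by the fractional amount $m\in(0,1)$. The difference lies in \emph{where} the splitting is applied. The paper first pulls one derivative $\partial_j$ out of $\partial^\alpha$, writes $x^{h+\beta}\partial^\alpha(u^\ell)=x^h\partial_j\big(x^\beta\partial^{\alpha-e_j}(u^\ell)\big)-\beta_j x^{h+\beta-e_j}\partial^{\alpha-e_j}(u^\ell)$, and then uses the symbol-level inequality $\langle\xi\rangle^{-m}|\xi_j|\le\tau|\xi_j|+C_\tau$ on the \emph{whole product} before any Leibniz expansion. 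Only afterwards does it expand $\partial^\alpha(u^\ell)=\ell u^{\ell-1}\partial^\alpha u+\text{(rest)}$, so the concentrated term $\tau C'_s\|u\|_s^{\ell-1}S_N^{s,\eps}[u]$ drops out immediately and the remaining $\tau$-piece automatically has every $|\delta_k|\le|\alpha|-1$. You instead expand first, distribute $x^\beta$, isolate one factor in $H^{s-m}$ via the multiplication property, and then apply the norm interpolation $\|v\|_{s-m}\le\tau\|v\|_s+C_\tau\|v\|_{s-1}$ to that single factor. This is the same inequality in a different guise, and the argument goes through; the price is that the concentrated term must be fished out of the already-expanded sum, which makes the bookkeeping (checking that every non-concentrated factor has total degree $\le N-1$ under the chosen $\gamma$-distribution) somewhat more delicate than in the paper's order of operations. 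A minor remark: the distribution $x^\beta=\prod_k x^{\gamma_k}$ in \eqref{nn6} is just a pointwise splitting, not the inverse Leibniz formula \eqref{pr3}.
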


\begin{proof} We
first consider the terms in
\eqref{nonlinestimate2} with
$\alpha=0$. Since $E\circ x^{h}\in{\rm
OP}G^{-m,0}(\rd)$ is bounded on
$H^s(\rd)$, we
 have, by Schauder's estimates:
\[
\| E(x^{h+\beta}u^{\ell})\|_{s}\leq
C'_{s}\|x^{\beta}u^{\ell}\|_{s}\leq
C''_{s}\|x^{\beta-e_{j}}u\|_{s}\cdot
\|x_{j}u^{\ell-1}\|_{s}
\] if, say,
$\beta_j\not=0$. Then we get
\begin{equation}\label{s0}\sum_{0<|\beta|\leq N}
\frac{\ve^{|\beta|}}{ |\beta|!}
\|E(x^{\beta}u^\ell)\|_{s} \leq
C'''_{s} \eps\|
\px^{\frac{1}{\ell-1}}u\|_{s}^{\ell-1}
\cdot S_{N-1}^{s,\ve}[u].
\end{equation} Consider now the terms
in \eqref{nonlinestimate2} with
 $\alpha\not=0$. We may write
\begin{eqnarray} x^{\beta}\partial^{\alpha}(x^{h}u^{\ell})&=&x^{h+\beta}\partial^{\alpha}(u^{\ell}) +
\sum_{\stackrel{0\neq \gamma\leq \alpha}{\gamma\leq h}}\binom{h}{\gamma}\frac{\alpha!}{(\alpha-\gamma)!}x^{h+\beta-\gamma}\partial^{\alpha-\gamma}(u^{\ell}) \nonumber \\ &=&x^{h}\partial_{j}(x^{\beta}\partial^{\alpha-e_{j}}(u^{\ell}))-\beta_{j}x^{h+\beta-e_{j}}\partial^{\alpha-e_{j}}(u^{\ell}) \nonumber \\ &&+
\sum_{\stackrel{0\neq \gamma\leq \alpha}{\gamma\leq h}}\binom{h}{\gamma}\frac{\alpha!}{(\alpha-\gamma)!}x^{h+\beta-\gamma}\partial^{\alpha-\gamma}(u^{\ell}).
\end{eqnarray}
Since $E$ is bounded $H^{s-m}(\rd)\to
H^s(\rd)$, we then obtain
\begin{eqnarray}\label{mario}
 \| E (x^{\beta}\partial^{\alpha}(x^{h}u^\ell)) \|_{s} &\leq&
C'_{s}\|\partial_{j}(x^{\beta}\partial^{\alpha-e_{j}}(u^{\ell}))\|_{s-m}+C'_{s}\beta_{j}
\|x^{\beta-e_{j}}\partial^{\alpha-e_{j}}(u^{\ell})\|_{s-m}
\nonumber \\ &&+C'_{s}
\sum_{\stackrel{0\neq \gamma\leq
\alpha}{\gamma\leq
h}}\binom{h}{\gamma}\frac{\alpha!}{(\alpha-\gamma)!}\|x^{\beta-\gamma}\partial^{\alpha-\gamma}(u^{\ell})\|_{s-m}.
\end{eqnarray}
Let us estimate the first term in the right-hand side of \eqref{mario}.
We observe that for every $\tau >0$
there exists a constant $C_{\tau}>0$ such that
$$\langle \xi \rangle^{-m}
|\xi_{j}|\leq \tau |\xi_{j}|
+ C_{\tau}.$$ Hence
\begin{eqnarray*}
\| \partial_{j}(x^{\beta}\partial^{\alpha-e_{j}}(u^{\ell}))
\|_{s-m}&=&  \|\langle D \rangle^{-m}\partial_{j}(x^{\beta}\partial^{\alpha-e_{j}}(u^{\ell}))\|_{s}
\\ &\leq& \tau  \|
\partial_{j}(x^{\beta}\partial^{\alpha-e_{j}}(u^{\ell}))\|_{s}
+ C_\tau\|
x^{\beta}\partial^{\alpha-e_{j}}(u^{\ell})\|_{s}
\\ &\leq&\tau
\beta_{j}\|x^{\beta-e_{j}}\partial^{\alpha-e_{j}}(u^{\ell})\|_{s}
+ \tau \|x^{\beta}\partial^{\alpha}(u^{\ell})\|_{s}
\\ &&+ C_\tau
\|x^{\beta}\partial^{\alpha-e_{j}}
(u^{\ell})\|_{s}.
\end{eqnarray*}
Now we replace
$$\partial^{\alpha}(u^{\ell})= \ell u^{\ell-1}\partial^{\alpha}u  +
\sum_{\delta_1+\ldots+\delta_{\ell}=\alpha\atop\delta_{k}\neq
\alpha\, \forall
k}\frac{\alpha!}{\delta_{1}! \ldots
\delta_{\ell}!}
\prod_{k=1}^l\partial^{\delta_{k}}u$$
in the last estimate and we come back
to \eqref{mario}. We get, for a new
constant $C'_s>0$,
\begin{eqnarray} \label{eco}
\| E(x^{\beta}\partial^{\alpha}(x^{h}u^{\ell}))
\|_{s}&
 \leq& C'_{s}C_\tau\|x^{\beta}\partial^{\alpha-e_{j}}
 (u^{\ell})\|_{s}
  + C'_{s}(1+\tau)\beta_{j}\|x^{\beta-e_{j}}
  \partial^{\alpha-e_{j}}(u^{\ell})
  \|_{s} \nonumber \\ &&+ \tau C'_{s} \|u \|_{s}^{\ell-1}
   \|x^{\beta}
  \partial^{\alpha}u\|_{s} \nonumber \\ &&+ \tau C'_{s}
  \sum_{\delta_1+\ldots+\delta_{\ell}=\alpha\atop\delta_{k}\neq
  \alpha\,
  \forall k}\frac{\alpha!}{\delta_{1}! \ldots \delta_{\ell}!}
   \|x^{\beta}\prod_{k=1}^l\partial^{\delta_{k}}u\|_{s} \nonumber \\ &&+C'_{s} \sum_{\stackrel{0\neq \gamma\leq \alpha}{\gamma\leq h}}\binom{h}{\gamma}\frac{\alpha!}{(\alpha-\gamma)!}\|x^{\beta-\gamma}\partial^{\alpha-\gamma}(u^{\ell})\|_{s-m}.
   \end{eqnarray}
We have now to estimate the terms in the right-hand
side of \eqref{eco}.
Concerning the first one,
applying Leibniz' formula we
obtain
\[\|x^{\beta}\partial^{\alpha-e_j}(u^{\ell})\|_s
\leq\sum_{\delta_1+\ldots+
\delta_\ell=\alpha-e_j}\frac{(\alpha-e_j)!}{\delta_1!\ldots
\delta_{\ell}!}
\|x^{\beta}\prod_{k=1}^l
\partial^{\delta_k}u\|_s.
\] If $|\beta|\geq |\alpha|,$
then we can argue as in the
previous section and find
$\gamma_1, \ldots,
\gamma_{\ell} \in \N^d$ such
that $\gamma_1 + \ldots
+\gamma_\ell=\beta$ and
$|\gamma_k|\geq |\delta_k|$
for every $k=1,\ldots, \ell.$
Moreover, we observe that the
following estimate holds:
\begin{equation}
\label{primastima}\frac{1}{|\beta|!} \cdot \frac{(\alpha-e_j)!}
{\delta_1!\ldots \delta_\ell !}
\leq \frac{1}{|\gamma_1|!\ldots |\gamma_\ell|!}.
\end{equation}
Then
\begin{multline}\label{s1}
\sum_{|\alpha|+|\beta|\leq
N\atop 0<|\alpha|\leq
|\beta|}
\frac{\ve^{|\alpha|+|\beta|}}{\max\{|\alpha|,
|\beta|\}!} \|
x^{\beta}\partial^{\alpha-e_j}(u^{\ell})\|_s
\leq C''_s \ve
\sum_{|\alpha|+|\beta|\leq
N\atop 0<|\alpha|\leq
|\beta|}
\sum_{\delta_1+\ldots
+\delta_\ell=\alpha-e_j}\prod_{k=1}^\ell
\frac{\ve^{|\gamma_k|+|\delta_{k}|}}{|\gamma_k|!}\\
\times\|x^{\gamma_k}\partial^{\delta_k}u\|_s
\leq C'''_s \ve
(S_{N-1}^{s,\ve}[u])^\ell.
\end{multline}
On the other hand, for
$|\beta|\leq |\alpha|-1$ we
can choose multi-indices
$\gamma_1,\ldots,
\gamma_\ell$ such that
$\gamma_1+\ldots+\gamma_\ell=\beta$
and $|\gamma_k|\leq
|\delta_k|$ for any $k
=1,\ldots, \ell$ and observe
that
\begin{equation}\label{secondastima}
\frac{1}{(|\alpha|-1)!}
\frac{(\alpha-e_j)!}{\delta_1! \ldots
\delta_\ell !} \leq
\frac{1}{|\delta_1|!\ldots
|\delta_\ell|! }.
\end{equation}
Then
\begin{multline}\label{s2}
\sum_{|\alpha|+|\beta|\leq
N\atop 0<|\beta|\leq
|\alpha|-1}\frac{\ve^{|\alpha|+|\beta|}}{\max\{|\alpha|,
|\beta|\}!} \|
x^{\beta}\partial^{\alpha-e_j}(u^{\ell})\|_s
\\
\leq C''_s
\sum_{|\alpha|+|\beta|\leq
N\atop 0<|\beta|\leq
|\alpha|-1}
\sum_{\delta_1+\ldots+
\delta_\ell=\alpha-e_j}
\prod_{k=1}^{\ell}
\frac{\ve^{|\gamma_k|+|\delta_k|}}{|\delta_k|!}\|x^{\gamma_k}
\partial^{\delta_k}u\|_s
 \leq C'''_s \ve
(S_{N-1}^{s,\ve}[u])^{\ell},
\end{multline}
for new constants $C''_s$ and
$C'''_s$.\par For the second term in
the right-hand side of \eqref{eco} we
can argue as before, with
$\ga_1+\ldots+\ga_\ell=\beta-e_j$ and
$|\ga_k|\leq|\delta_k|$ for
$k=1,\ldots,\ell$, if
$|\beta|\leq|\alpha|$ or
$|\ga_k|\geq|\delta_k|$ for
$k=1,\ldots,\ell$, if
$|\beta|\geq|\alpha|$, using the
estimates
$$\frac{\beta_j}{|\beta|} \frac{1}{(|\beta|-1)!}\frac{(\alpha-e_j)!}{\delta_1!\ldots \delta_\ell!}\leq \frac{1}{|\gamma_1|!\ldots |\gamma_\ell|!}$$
respectively,
$$\frac{\beta_j}{|\alpha|}\frac{1}{(|\alpha|-1)!}\frac{(\alpha-e_j)!}{\delta_1!\ldots \delta_\ell!}\leq \frac{1}{|\delta_1|!\ldots |\delta_\ell|!}$$
instead of
\eqref{primastima},
respectively
\eqref{secondastima}. We
obtain, for a new constant
$C'_s>0$,
\begin{equation} \label{s3}\sum_{|\alpha|+|\beta|\leq N\atop\alpha \neq 0}
\frac{\ve^{|\alpha|+|\beta|}}{\max\{|\alpha|, |\beta|\}!} \| \beta_j
  x^{\beta-e_j}\partial^{\alpha-e_j}(u^{\ell})\|_s \leq C'_s \ve
  (S_{N-1}^{s,\ve}[u])^{\ell}.\end{equation}
Concerning the fourth term in
\eqref{eco}, we can decompose similarly
$\beta=\ga_1+\ldots+\ga_\ell$ and argue
as before, taking into account that now
$|\ga_1+\ldots+\ga_\ell+\delta_{1}+\ldots+\delta_{\ell}|=|\alpha+\beta|$,
so that we do not longer gain $\eps$ as
a factor in the estimate. Hence we get
\begin{equation} \label{s4}
\sum_{|\alpha+\beta|\leq N\atop\alpha
\neq
0}\frac{\ve^{|\alpha|+|\beta|}}{\max\{|\alpha|,
|\beta|\}!}\sum_{\delta_1+\ldots+
\delta_\ell=\alpha\atop
\delta_k\not=\alpha\,\forall
k}\frac{\alpha!}{\delta_1!\ldots
\delta_{\ell}!}
\|x^{\beta}\prod_{k=1}^\ell\partial^{\delta_k}u\|_s
\leq
C'_{s}(S_{N-1}^{s,\ve}[u])^{\ell}\end{equation}
for a new constant $C'_s$. Finally, for
the last term in \eqref{eco}, we first
observe that $\max
\{|\alpha-\gamma|,|\beta-\gamma|\}
=\max \{|\alpha|, |\beta|\}-|\gamma|.$
Then we can argue as before obtaining
the estimate
\begin{equation}\label{s5}
\sum_{|\alpha+\beta|\leq
N\atop\alpha \neq
0}\frac{\ve^{|\alpha|+|\beta|}}{\max\{|\alpha|,
|\beta|\}!} \sum_{\stackrel{0\neq \gamma\leq \alpha}{\gamma\leq h}}\binom{h}{\gamma}\frac{\alpha!}{(\alpha-\gamma)!}\|x^{\beta-\gamma}\partial^{\alpha-\gamma}(u^{\ell})\|_{s-m}\leq C_{s}\ve(S_{N-1}^{s,\ve}[u])^{\ell}.
\end{equation}
The estimates \eqref{s0}, \eqref{s1},
\eqref{s2}, \eqref{s3},
\eqref{s4}, \eqref{s5} applied in
\eqref{eco} yield
\eqref{nonlinestimate2}.
\end{proof}

\textit{End of the proof of Theorem
\ref{AA5.1} (the case $0<m<1$).} Using the same argument as in the case $m\geq 1$, by Propositions \ref{stima0}, \ref{stima1}, \ref{commutatore},
\ref{Gammanonlinearestimates} we obtain
\begin{multline*}
S_{N}^{s,\ve}[u] \leq \|u
\|_{s}+ C'_{s}
S_{\infty}^{s,\ve}[f]+
C'_s\ve
S_{N-1}^{s,\ve}[u]+\sum_l\Big(\tau
C'_{s} \| u\|_{s}^{\ell-1}
 S_{N}^{s,\ve}[u]
\nonumber
\\+ C'_{s} (\ve
C_{\tau}+\tau+\ve)
(S_{N-1}^{s,\ve}[u])^{\ell}+C'_{s}\ve
\|\langle x
\rangle^{\frac{1}{\ell-1}}
u\|^{\ell-1}_{s}
S_{N-1}^{s,\ve}[u]\Big)
\end{multline*}
for every $N\geq1$
 and $\eps$ small enough. Now, choosing
$\tau < (2\sum_l C'_{s}
\|u\|_{s}^{\ell-1})^{-1}$ we obtain
\begin{multline*}S_{N}^{s,\ve}[u] \leq 2\|u
\|_{s}+ 2C'_{s}
S_{\infty}^{s,\ve}[f]+
2C'_s\ve S_{N-1}^{s,\ve}[u]+
\\
+\sum_l\Big( 2C'_{s} (\ve C_{\tau}+\tau+\ve)
(S_{N-1}^{s,\ve}[u])^{\ell}+2C'_{s}\ve
\|\langle x
\rangle^{\frac{1}{\ell-1}}
u\|^{\ell-1}_{s}
S_{N-1}^{s,\ve}[u]\Big).
\end{multline*}
 Then we can
iterate the last estimate
observing that, shrinking
$\tau$ and then $\ve$, the
quantity $\ve C_{\tau}+\tau+\ve$
can be taken arbitrarily
small. This gives
$S^{s,\ve}_\infty[u]<\infty$
and therefore $u\in\sg$.

\section{Remarks and applications}\label{secesempi}
\subsection{Lower a priori regularity}\label{lowerapriori}
For special nonlinearities, in Theorem \ref{mainthm} we
can assume lower a priori regularity on the solution $u$.
 For example, if $F[u]=(\partial^{\rho}u)^l$, $|\rho|
 \leq\min\{m-1,0\}$, $l\in\mathbb{N}$, $l\geq2$, or even
   $F[u]=|\partial^{\rho}u|^{l-1}\partial^\rho u$,
    $|\rho|\leq\min\{m-1,0\}$, $l\in\mathbb{N}$, $l>2$
     odd (as in \eqref{KG}), then we can assume
     $u\in H^s(\rd)$, with
     $s>\frac{d}{2}-\frac{m-|\rho|}{l-1}$, $s\geq|\rho|$.
      Indeed, such a solution is actually in
      $H^\infty(\rd)$; see e.g. \cite[Lemma 4.1,
      Remark 4.1]{A3} (where that threshold is also
       proved to be sharp). We also refer to
       \cite{A3} for  other types of non-linearities.
\subsection{Eigenfunctions of $G$-elliptic operators}
In the linear case, the assumptions on the a priori regularity of $u$ can be
relaxed assuming $u \in \mathcal{S}'(\R^d).$ Then, we have the following result.
\begin{theorem}\label{linearthm}
Let $P$ be a G-elliptic
pseudodifferential operator with a
symbol $p(x,\xi)$ satisfying
\eqref{Gsymbols}. Then there exists
$\eps>0$ such that every solution $u
\in \mathcal{S}'(\R^d)$ of the equation
$Pu=0$ extends to a holomorphic
function in the sector of
$\mathbb{C}^d$
$$\mathcal{C}_\eps=\{z=x+iy\in\mathbb{C}^d:\
|y|\leq\eps(1+|x|)\},
$$ satisfying there the
estimates \eqref{in0bis} for
some constants $C>0, c>0$.
\end{theorem}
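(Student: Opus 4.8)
The plan is to reduce everything to the linear machinery already developed for the nonlinear problem, exploiting that both the forcing term and the nonlinearity are now absent, and to keep track of the fact that the admissible width $\eps$ will depend only on $P$, $s$ and the dimension, not on the particular solution $u$. Note that the case distinction $m\geq 1$ versus $0<m<1$ made for Theorem \ref{AA5.1} was only forced by the analysis of the nonlinear term, so here no such splitting is needed: Propositions \ref{stima1} and \ref{commutatore} hold for all $m>0$.

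First I would settle the a priori regularity, which in the linear case is immediate and bypasses the bootstrap of Lemma \ref{moreregularity}. Let $E\in{\rm OP}G^{-m,-n}(\rd)$ be a parametrix of $P$ and write $EP=I+R$ with $R$ globally regularizing, so that $R\colon\cS'(\rd)\to\cS(\rd)$ continuously. Applying $E$ to $Pu=0$ gives $(I+R)u=0$, hence $u=-Ru\in\cS(\rd)$. Thus every tempered solution is automatically a Schwartz function, and in particular $\|u\|_s<\infty$ and $S^{s,\eps}_N[u]<\infty$ for every $s\geq0$, $\eps>0$ and $N\in\N$.

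Next I would run the iterative scheme from the end of the proof of Theorem \ref{AA5.1} with $f=0$ and $F[u]=0$. Multiplying $Pu=0$ by $x^\beta\partial^\alpha$, commuting $P$ past $x^\beta\partial^\alpha$ and applying the parametrix exactly as in \eqref{colonna}, the two source terms drop out and one is left, for every $N\geq1$, with
\begin{equation*}
S^{s,\eps}_N[u]\leq \|u\|_s+\sum_{0<|\alpha|+|\beta|\leq N}\frac{\eab}{\mab}\|R(x^\beta\partial^\alpha u)\|_s+\sum_{0<|\alpha|+|\beta|\leq N}\frac{\eab}{\mab}\|E[P,x^\beta\partial^\alpha]u\|_s.
\end{equation*}
Here I would use $u\in\cS(\rd)$ to legitimately invoke Proposition \ref{commutatore}; together with Proposition \ref{stima1} (the regularizing $R$ lies in ${\rm OP}G^{-1,-1}(\rd)$) this bounds the last two sums by $C_s\eps\,S^{s,\eps}_{N-1}[u]$, whence
\begin{equation*}
S^{s,\eps}_N[u]\leq \|u\|_s+C_s\eps\,S^{s,\eps}_{N-1}[u].
\end{equation*}
Fixing any $s\geq0$ and choosing $\eps$ so small that $C_s\eps\leq 1/2$, a trivial induction (using $S^{s,\eps}_0[u]=\|u\|_s$) yields $S^{s,\eps}_N[u]\leq 2\|u\|_s$ for all $N$, hence $S^{s,\eps}_\infty[u]\leq 2\|u\|_s<\infty$. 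By Proposition \ref{stima0} this gives $u\in\sg$, and Theorem \ref{estensione} then provides the holomorphic extension to a sector $\mathcal{C}_{\eps'}$ together with the decay \eqref{in0bis}.

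The point requiring the most care is not an analytic obstacle but a bookkeeping matter, namely the claimed independence of the sector width on $u$. The smallness threshold $C_s\eps\leq 1/2$ depends only on $s$, $d$ and the symbol seminorms of $P$ and $E$ entering Propositions \ref{stima1} and \ref{commutatore}, so $\eps$ can be fixed once and for all before quantifying over $u$. When passing from $S^{s,\eps}_\infty[u]<\infty$ to the estimate \eqref{cla0}, the resulting constant splits into a geometric base comparable to $\eps^{-1}$ and a prefactor comparable to $\|u\|_s$; since the radius of convergence of the Taylor series in the proof of Theorem \ref{estensione} is governed by the geometric base alone, both the sector width $\eps'$ and the decay rate $c$ are independent of $u$, while only the constant $C$ in $|u(x+iy)|\leq Ce^{-c|x|}$ scales with $\|u\|_s$. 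This is exactly consistent with the statement, which asserts only the existence of such constants $C,c$.
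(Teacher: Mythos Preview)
Your proof is correct and takes a genuinely different route from the paper for the uniformity of $\eps$ over all solutions. The paper's argument is softer: it first invokes Theorem \ref{mainthm} (which a priori yields a sector depending on $u$), and then appeals to the classical Fredholm theory of $G$-elliptic operators to conclude that $\ker P\subset\cS(\rd)$ is finite dimensional; a common $\eps$ is then obtained by taking the minimum over a basis. You instead re-run the linear part of the iteration directly and observe that the constants $C_s$ in Propositions \ref{stima1} and \ref{commutatore} depend only on $s$, $d$ and the symbol seminorms of $P$ and $E$, so the smallness threshold for $\eps$ is fixed before $u$ is chosen; you then trace the constants through Proposition \ref{stima0} and Theorem \ref{estensione} to separate the $u$-dependent prefactor from the $u$-independent geometric base governing the sector width and decay rate. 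Your approach is more elementary (it avoids Fredholm theory) and more quantitative, and it makes explicit that the decay rate $c$ can also be taken uniformly; the paper's approach is shorter but requires the additional structural input of finite-dimensionality of the kernel.
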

\begin{proof}
It follows from the existence
of a parametrix (see Section
\ref{prelimi}) that any solution
$u\in\cS'(\rd)$ of $Pu=0$ is in fact a
Schwartz function. Hence we can
apply Theorem \ref{mainthm} directly
without using Lemma \ref{moreregularity}.
Moreover it follows from the
classical Fredholm theory of
{\it globally} regular operators
(see e.g. \cite[Theorem
3.1.6]{nicola}) that the
kernel of $P$ is a finite
dimensional subspace of
$\cS(\rd)$, which implies
that there exists a
 sector where all the
 solutions extend
 holomorphically.
\end{proof}

The main application of
Theorem \ref{linearthm}
concerns eigenfunctions of
$G$-elliptic operators of
orders $m>0$, $n>0$. Indeed,
in that case, if $P$ is
$G$-elliptic also $P-\lambda$
is $G$-elliptic for every $\lambda\in\C$,
and one can apply Theorem
\ref{linearthm} to
$P-\lambda$. As regards
existence, we recall that if
$P\in {\rm OP}G^{m,n}(\rd)$,
$m>0$, $n>0$, is formally
self-adjoint (i.e. symmetric
when regarded as an operator in $L^2(\rd)$
with domain $\cS(\rd)$) then
it has a sequence of real
eigenvalues either diverging
to $+\infty$ or $-\infty$,
and $L^2(\rd)$ has an
orthonormal basis made of
eigenfunctions of $P$ (cf.
e.g \cite{MP} or
\cite[Theorem
4.2.9]{nicola}). As an
example in dimension 1, one
can consider the operator
$Pu=-(1+x^2)u+x^2 u-2xu'$,
$x\in\R$.
\subsection{Solitary waves}
The present subsection is
devoted to some applications
to solitary waves, in
particular to the proof of
Theorem \ref{applthm}. First
we report the following
useful characterization of the condition
\eqref{simbest}.
\begin{proposition}\label{carat}
The estimates
\eqref{simbest}{} are
equivalent to requiring that
$p(\xi)$ extends to a
holomorphic function
$p(\xi+i\eta)$ in a sector of
the type \eqref{sector}, and
satisfies there the bound
$|p(\xi+i\eta)|\leq
C'\langle\xi\rangle^{m}$.
\end{proposition}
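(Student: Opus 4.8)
The plan is to prove the two implications separately, working throughout in dimension $d=1$ as in the statement.

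For the forward implication I would assume \eqref{simbest} and reconstruct $p$ near the real axis from its Taylor series, exactly as in the proof of Theorem \ref{estensione}. Fixing $\xi\in\R$, I consider
\[
\sum_{\alpha=0}^\infty \frac{p^{(\alpha)}(\xi)}{\alpha!}(z-\xi)^\alpha .
\]
By \eqref{simbest} the general term is bounded by $A\langle\xi\rangle^m\big(A|z-\xi|/\langle\xi\rangle\big)^\alpha$, so the series converges on the disc $|z-\xi|<\langle\xi\rangle/A$ and there defines a holomorphic function. Since $1+|\xi|\le\sqrt2\,\langle\xi\rangle$, a point $z=\xi+i\eta$ of the sector \eqref{sector} satisfies $|z-\xi|=|\eta|\le\eps(1+|\xi|)\le\eps\sqrt2\,\langle\xi\rangle$; hence choosing $\eps$ with $q:=A\eps\sqrt2<1$ places every point of \eqref{sector} inside the disc centered at its own real part. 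On the (convex, hence connected) overlap of two such discs the two local expansions agree, because both coincide with $p$ on a real interval and $\R\subset\C$ is totally real; thus they patch to a single holomorphic extension $p(\xi+i\eta)$ on the sector. Summing the geometric series there gives $|p(\xi+i\eta)|\le\frac{A}{1-q}\langle\xi\rangle^m$, the desired bound $C'\langle\xi\rangle^m$.

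For the converse I would recover the derivative bounds from Cauchy's integral formula. Assuming $p$ holomorphic in \eqref{sector} with $|p(\xi+i\eta)|\le C'\langle\xi\rangle^m$ there, for $|\xi|\ge1$ I set $r=c(1+|\xi|)$ with $c\le\eps$ small and integrate over $|z-\xi|=r$; this circle lies in the sector, and on it $\mathrm{Re}\,z$ stays within a fixed fraction of $\xi$, so $\langle\mathrm{Re}\,z\rangle\asymp\langle\xi\rangle$ and $|p(z)|\le C''\langle\xi\rangle^m$. Then
\[
|p^{(\alpha)}(\xi)|=\Big|\frac{\alpha!}{2\pi i}\oint_{|z-\xi|=r}\frac{p(z)}{(z-\xi)^{\alpha+1}}\,dz\Big|\le\frac{\alpha!\,C''\langle\xi\rangle^m}{r^{\alpha}},
\]
and since $r^\alpha\ge c^\alpha\langle\xi\rangle^\alpha$ (using $1+|\xi|\ge\langle\xi\rangle$) this yields $|p^{(\alpha)}(\xi)|\le C''(c^{-1})^\alpha\alpha!\langle\xi\rangle^{m-\alpha}$, i.e.\ \eqref{simbest} with $A=\max\{C'',c^{-1}\}$. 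For $|\xi|\le1$ I would instead use a fixed disc contained in the neighborhood of $[-1,1]$ on which $p$ is holomorphic and bounded; since $\langle\xi\rangle\asymp1$ there, Cauchy's estimates on this fixed radius give the same bound after enlarging $A$. Combining the two ranges completes this direction.

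The routine parts are the geometric series estimate and the Cauchy bound; the one point requiring care is the geometry of the converse, namely verifying that a circle of radius proportional to $1+|\xi|$ about the real point $\xi$ stays inside \eqref{sector} and that the weight $\langle\mathrm{Re}\,z\rangle$ remains comparable to $\langle\xi\rangle$ along it. This is what forces $c$ to be small relative to $\eps$, and it is the only place where the widening shape of the sector, as opposed to a fixed strip, is genuinely used.
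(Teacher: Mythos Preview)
Your proof is correct and follows essentially the same route as the paper: Taylor expansion at each real point (mirroring the end of the proof of Theorem~\ref{estensione}) for the forward direction, and Cauchy's estimates on discs of radius comparable to $\langle\xi\rangle$ for the converse. The only cosmetic difference is your case split $|\xi|\ge1$ versus $|\xi|\le1$ in the converse, which the paper avoids by taking the radius $\eps'\langle\xi\rangle$ uniformly; since $\langle\xi\rangle\asymp 1+|\xi|$, your argument for $|\xi|\ge1$ already covers all $\xi$, so the split is harmless but unnecessary.
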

\begin{proof}
The sufficiency of \eqref{simbest} for
the holomorphic extension with the
desired bound follows exactly as in the
last part of the proof of Theorem
\ref{estensione}, where in \eqref{cla6}
the exponential factor is now replaced
by $\langle \xi\rangle^{m}$.\par In the
opposite direction, we obtain
\eqref{simbest} from Cauchy's estimates
applied to a disc in $\mathbb{C}$ with
center at $\xi$ and radius
$\eps'\langle \xi\rangle$ for some
small $\eps'>0$ (independent of $\xi$).
\end{proof}

\begin{proof}[Proof of
theorem \ref{applthm}]
Consider first the equation \eqref{kdvt}. We observe that
$u$ satisfies the equation
\begin{equation}\label{eqsol}
Mu+(V-1)u=F[u],
\end{equation}
cf. the proof of \cite[Theorem
3.2.1]{BL2}. By \eqref{plm0} and the
condition $V>1$ the symbol of the
linear part of the equation 
\eqref{eqsol}, that is $p(\xi)+V-1$, is
$G$-elliptic: for some constant $c>0$
\begin{equation}\label{plm}
p(\xi)+V-1\geq c\langle
\xi\rangle^m,\quad \xi\in\R.
\end{equation}
Moreover by \eqref{simbest} it
satisfies the analytic symbol estimates
\eqref{Gsymbols} (with $n=0$). To
conclude the proof it is sufficient to
show that $u\in H^s(\rd)$ and $\langle
x\rangle^{\eps_0} u\in L^2(\rd)$ for
some $\eps_0>0$, $s>d/2$, and to apply
Theorem \ref{mainthm}. The fact that
$u$ enjoys the above properties will
follow from \cite[Theorem 3.1.2,
Corollary 4.1.6]{BL2} once we observe
that the function
\[
K(\xi)=\frac{1}{p(\xi)+V-1}
\]
satisfies $|K(\xi)|\leq C\langle\xi\rangle^{-m}$ for some $C>0$ and belongs to $H^\infty(\rd)$.
This is clear, because \eqref{plm} and \eqref{simbest} give
\[
|\partial^\alpha K(\xi)|\leq C_\alpha\langle \xi\rangle^{-m-|\alpha|},
\]
and $m\geq1$.\par The case of the
equation \eqref{lwt} is
completely similar: in place
of \eqref{plm} one just has
$V\, Mu+V-1=F[u],$ and the
above arguments apply to the
function
$K(\xi)=(Vp(\xi)+V-1)^{-1}$.\par
The theorem is then proved.
\end{proof}

As an example where the
solutions are known in closed
form, consider the
generalized Korteweg-de Vries
equation
\begin{equation}\label{kdvgen}
v_t+v_x+v^{l} v_x+v_{xxx}=0,
\end{equation}
where $l\geq1$ is a positive
integer. Here we have
$p(\xi)=\xi^2$. The solitary
wave solutions have the form
$v(x,t)=u(x-Vt)$, where $V>1$
and
\[
u(x)=\sqrt[l]{\frac{(l+1)(l+2)(V-1)}{2}}{\rm
Cosh}^{-2/{l}}\Big(\frac{\sqrt{V-1}}{2}lx\Big),
\]
which has poles at the points
$z=i\frac{(2k+1)\pi}{l\sqrt{V-1}}$,
$k\in\mathbb{Z}$. Also, the
exponential decay in sectors
containing the real axis predicted
by Theorem \ref{applthm} is
confirmed.\par During the
years 1990-2000, several
papers were devoted to $5$-th
order and $7$-th order
generalization of KdV, see
for example Porubov
\cite[Chapter 1]{A32}. The
corresponding stationary
equation is of the type
\begin{equation}
\label{A4.4} \sum_{j=0}^m a_j
u^{(j)} +Q[u]=0,
\end{equation}
where $Q$ is a polynomial,
$Q[u] = \sum_{j=2}^M b_ju^j$
and $a_0\neq 0.$ Because
of physical assumptions, the
equation $\sum_{j=0}^m a_j
\lambda^j=0$ has no purely
imaginary roots, and then all
the solutions of the
corresponding linear equation
have exponential
decay/growth. This condition
can be read as
$G$-ellipticity of the symbol
of the linear part of the
corresponding stationary
equation: $\sum_{j=0}^m
a_j(i\xi)^j\not=0$ for
$\xi\in\R$, in particular
$\xi^2+V-1\not=0$ in the case
of\eqref{kdvgen}. Non-trivial solutions
$u$ of \eqref{A4.4} with
$u(x) \rightarrow 0$ as $x
\rightarrow \pm \infty$ may
exist or not, according to
the coefficients $a_j, b_j,$
and when they exist, in
general they do not have an
explicit analytic expression.
Holomorphic extension and
exponential decay on a sector
are granted anyhow by Theorem \ref{applthm}.

\subsection{Standing wave solutions of the Schr\"odinger equation} Consider the
Schr\"odinger equation in
$\rd$,
\[
i\partial_t v+\Delta v=\mu
|v|^{l-1}v,\quad (t,x)\in\R\times\rd,
\]
with $l\in\mathbb{N}$, $l>2$ odd,
$\mu\in\mathbb{C}$, and look at
standing wave solutions, i.e.
$v(t,x)=e^{i\omega t} u(x)$,
$\omega>0$. The corresponding equation
for $u$ is
\[
\Delta u-\omega u=\mu
|u|^{l-1}u.
\]
Since the operator $\Delta-\omega$ is
$G$-elliptic (because $\omega>0$), when
solutions $u$ exist, with $u\in
H^s(\rd)$,
$s>\frac{d}{2}-\frac{2}{l-1}$,
$s\geq0$, and $\langle
x\rangle^{\eps_0} u\in L^2(\rd)$, for
some $\eps_0>0$, then Theorem
\ref{mainthm} and the remark in Subsection
\ref{lowerapriori} assure that $u$
extends to a holomorphic function on a
sector of the type \eqref{in6} and
displays there an exponential decay of type \eqref{in0bis}.
This applies, in particular, to the
bound states in $H^1(\rd)$ exhibited in
\cite{A35} when $l<\frac{d+2}{d-2}$,
$d\geq3$.
\subsection{Sharpness of the
results} Here we show the
sharpness of Theorem
\ref{mainthm} as far as the
shape of the domain of
holomorphic extension is concerned.\par
Consider in dimension $d=1$
the equation
\[
-u''+e^{-2i\theta}
u=\frac{e^{-2i\theta}}{2}u^2,
\]
where $-\pi<\theta\leq\pi$,
$|\theta|\not=\frac{\pi}{2}$.
This equation is
$G$-elliptic, since it is
elliptic and
$\xi^2+e^{-2i\theta}\not=0$
for every $\xi\in\R$. An
explicit Schwartz solution is
given by
\[
u(x)=3{\rm
Cosh}^{-2}\Big(\frac{e^{-i\theta}}{2}x\Big).
\]
The function $u$ extends to a
meromorphic function in the
complex plane with poles at
$z=e^{i(\theta+\pi/2)}(2k+1)\pi$,
$k\in\mathbb{Z}$. This shows
that in Theorem \ref{mainthm}
we cannot replace the sector
\eqref{in6}, e.g., with a
larger set of the type
\[
\{z=x+iy\in\mathbb{C}^d:\
|y|\leq\eps(1+|x|)\psi(x)\},
\]
for any continuous function
$\psi(x)>0$, with $\psi(x)\to+\infty$
as $|x|\to+\infty$.

\section*{Acknowledgments}
We wish to thank Piero D'Ancona, Todor
Gramchev, Luigi Rodino, Enrico Serra
and Paolo Tilli for helpful discussions
about several issues related to the
subject of this paper.

\end{document}